\newtheorem{theorem}{Theorem}[section]
\newtheorem{proposition}[theorem]{Proposition}
\newtheorem{lemma}[theorem]{Lemma}
\newtheorem{corollary}[theorem]{Corollary}
\theoremstyle{definition}
\newtheorem{remark}[theorem]{Remark}
\newtheorem{example}[theorem]{Example}
\newtheorem{definition}[theorem]{Definition}
\newtheorem{assumption}{Assumption}
\numberwithin{equation}{section}
\definecolor{myblue}{RGB}{0,0,128}
\def\XXint#1#2#3{{\setbox0=\hbox{$#1{#2#3}{\int}$ }
		\vcenter{\hbox{$#2#3$ }}\kern-.57\wd0}}
\newcommand{\dx}{\,\mathrm{d}x}
\newcommand{\ds}{\,\mathrm{d}s}
\newcommand{\e}{\varepsilon}
\newcommand{\w}{\omega}
\newcommand{\R}{\mathbb{R}}
\newcommand{\Z}{\mathbb{Z}}
\newcommand{\N}{\mathbb{N}}
\newcommand{\Q}{\mathbb{Q}}
\newcommand{\Sph}{\mathbb{S}}
\newcommand{\Sd}{\Sph^{d-1}}
\newcommand{\M}{\mathcal{M}}
\renewcommand{\a}{\alpha}
\newcommand{\Hd}{\mathcal{H}^{d-1}}
\newcommand{\dHd}{\,\mathrm{d}\Hd}
\newcommand{\dH}{\,\mathrm{d}\mathcal{H}}
\renewcommand{\t}{\vartheta}
\newcommand{\E}{\mathbb{E}}
\newcommand{\A}{\mathcal{A}}
\newcommand{\Adm}{\mathscr{A}}
\newcommand{\F}{\mathcal{F}}
\newcommand{\I}{\mathcal{I}}
\renewcommand{\P}{\mathbb{P}}
\newcommand{\Eg}{E_g}
\newcommand{\Egw}{\Eg(\omega)}
\newcommand{\Hn}{H^\nu}
\newcommand{\Hnx}{\Hn(x_0)}
\newcommand{\Qn}{Q^\nu}
\newcommand{\Qnr}{\Qn_\rho}
\newcommand{\Qnxr}{\Qnr(x_0)}
\newcommand{\Br}{B_\rho}
\newcommand{\Brx}{\Br(x_0)}
\newcommand{\Rntl}{R_{t,\ell}^\nu}
\newcommand{\uzn}{u^{a,b,\nu}}
\newcommand{\eg}{\,{e.g.}, }
\newcommand{\ie}{\,{i.e.}, }
\newcommand{\defas}{:=}
\begin{document}
\author{Annika Bach}
\address[Annika Bach]{Dipartimento di Matematica ``Guido Castelnuovo'', Sapienza Universit\`{a} di Roma, Piazzale Aldo Moro 2,
00185 Roma, Italy}
\email{annika.bach@uniroma1.it}

\author{Matthias Ruf}
\address[Matthias Ruf]{Section de mathématiques, Ecole Polytechnique Fédérale de Lausanne, Station 8, 1015 Lausanne, Switzerland}
\email{matthias.ruf@epfl.ch}

\title[Fluctuation estimates for stochastic homogenization of partitions]{Fluctuation estimates for the multi-cell formula in stochastic homogenization of partitions}

\begin{abstract}
	In this paper we derive quantitative estimates in the context of stochastic homogenization for integral functionals defined on finite partitions, where the random surface integrand is assumed to be stationary. Requiring the integrand to satisfy in addition a multiscale functional inequality, we control quantitatively the fluctuations of the asymptotic cell formulas defining the homogenized surface integrand. As a byproduct we obtain a simplified cell formula where we replace cubes by almost flat hyperrectangles.
\end{abstract}

\maketitle
	


\section{Introduction}
In a nutshell, stochastic homogenization deals with (mostly physical) problems in an environment that changes on a very small scale, but with a spatially heterogeneous distribution. These scales can enter for instance through oscillating $x$-dependent coefficients of a PDE or integrands of an integral functional. As the scale of random heterogeneous oscillations gets smaller and smaller or, equivalently, the surrounding space invades the whole space, one aims to derive an effective, averaged model in a spirit similar to the law of large numbers. In the context of linear elliptic PDEs of the form
\begin{equation}\label{intro:elliptic}
	-\text{div}(A(x/\e,\w)\nabla u)=f,
\end{equation}
where $\e$ denotes the scale of the fine oscillations and $\omega$ belongs to the sample space $\Omega$ of an underlying probability space $(\Omega,\mathcal F,\P)$, the first \emph{qualitative} results date back to the work of Kozlov \cite{Koz} and Papanicolaou and Varadhan \cite{PaVa}. 
In a variational context, first qualitative results in the nonlinear setting were obtained by Dal Maso and Modica~\cite{DMMoI,DMMoII}, where the authors derive an effective, averaged model for integral functionals of the form
\begin{equation}\label{intro:volume-integral}
	\int_{\Omega}L(x/\e,\w,\nabla u)\dx
\end{equation}
defined for \emph{Sobolev functions}.
Starting from these qualitative results, the interest grew in deriving error estimates for the homogenization approximation, which led to the development of a \emph{quantitative} theory for stochastic homogenization.
In the context of linear elliptic PDEs as in~\eqref{intro:elliptic}, first quantitative convergence results are already contained in \cite{Yur} and an unpublished preprint by Naddaf and Spencer \cite{NaSp}, the latter being optimal in the regime of a small ellipticity ratio. Then, in a non-perturbative regime there has been enormous progress in recent years starting with the works of Gloria and Otto \cite{GO_Var, GO_Error, GO_Jems} and Gloria, Neukamm and Otto \cite{GNO_Inv} (partially for discrete equations on a lattice). In terms of stochastic integrability of the error estimates, a breakthrough came with the work of Armstrong and Smart \cite{ArmSma}, which also covers the behavior of minimizers of functionals as in~\eqref{intro:volume-integral} under the assumption of uniform convexity, thus
giving the first quantitative version of the results in \cite{DMMoI,DMMoII}. Further quantitative results in this nonlinear setting were obtained for instance in \cite{AM,FiNe}. Finally, in the linear elliptic setting~\eqref{intro:elliptic} essentially optimal results in terms of scaling and stochastic integrability of weighted averages of the first order corrector were obtained in \cite{AKM,GO_Semi}. This list is by no means exhaustive as different assumptions on the statistics of the medium lead to different results. 

\smallskip

In the last decade, qualitative stochastic homogenization has been extended to variational models involving discontinuities, namely, to functionals defined on the space of \emph{functions of bounded variation}. In particular, models for interfacial energies were studied first in discrete environments where the underlying medium is either a periodic lattice with random interactions \cite{BP} or a random point set \cite{ACR,Bra-Cic-Ruf,BP20}. The latter results then motivated the qualitative analysis carried out in \cite{R17, BCR19} for random discrete approximations of free-discontinuity functionals (see also \cite{TrSl} for a discrete approximation of the total variation on point clouds).
In a continuum environment, the qualitative theory covers by now, among others, the stochastic homogenization of free-discontinuity functionals with randomly oscillating integrands \cite{CDMSZ19,CDMSZ21} or defined on randomly perforated domains~\cite{PSZ20}, of diffuse interface problems~\cite{Morfe}, and of singularly-perturbed elliptic approximations of free-discontinuity functionals~\cite{BMZ21}.

\smallskip

In this paper we derive first quantitative results for interfacial energies in a continuum medium. Namely, we consider energies acting on \emph{finite partitions}, \ie functions of bounded variation taking values in a finite set. More precisely, given an open set $D\subset\R^d$ with Lipschitz boundary and $\M\subset\R^m$ finite, we consider energies of the form
\begin{equation}\label{intro:def-energy}
E_{g,\e}(u,D)=\int_{D\cap S_u}g(\w,x/\e,u^+-u^-,\nu_u)\,\mathrm{d}\mathcal{H}^{d-1},\qquad u\in BV(D;\M),
\end{equation}
where $S_u$, $u^\pm$ and $\nu_u$ denote the jumpset of $u$, the traces of $u$ on both sides of $S_u$, and the generalized normal to $S_u$, respectively, while $g$ is a jointly measurable and uniformly bounded function (see Definition~\ref{defadmissible}).
From a deterministic point of view, \ie when $g$ is independent of $\w$, functionals as in~\eqref{intro:def-energy} have been studied by Ambrosio and Braides in \cite{AmBrI,AmBrII} and we refer to those two papers for more details. In particular, in~\cite{AmBrII} the authors prove a periodic homogenization result for functionals as in~\eqref{intro:def-energy}. Although a corresponding stochastic homogenization result for stationary random integrands $g$ is, to the best of our knowledge, not available in the literature, it follows from by now standard methods. Indeed, the following result can be proved by following essentially the lines of~\cite[Theorem 3.2]{AmBrI} (using the integral representation result~\cite{BFLM}) and~\cite[Theorem 3.12]{CDMSZ19} (restricting to functions taking only finitely many values):
{\renewcommand{\thetheorem}{0.\arabic{theorem}}
\begin{theorem}
Let $g$ be an admissible random surface tension in the sense of Definition~\ref{defadmissible} satisfying Assumption \ref{ass:1} (see Section~\ref{s.preliminaries}). As $\e\searrow 0$ the sequence of functionals $E_{g,\e}$ defined in~\eqref{intro:def-energy} $\Gamma$-converges with respect to the strong $L^1(D)$-convergence to the functional
\begin{equation}\label{intro:Ehom}
E_{g_{\rm hom}}(u,D)=\int_{D\cap S_u}g_{\rm hom}(\w,u^+,u^-,\nu_u)\,\mathrm{d}\mathcal{H}^{d-1},
\end{equation}
where $g_{\rm hom}$ is as in~\eqref{eq:cellformula}. Moreover, if $g$ is ergodic, then $g_{\rm hom}$ is deterministic.
\end{theorem}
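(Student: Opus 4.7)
The plan is to adapt to the stochastic setting the standard blow-up plus cell-formula approach from the periodic analysis of~\cite{AmBrII}. First I would establish $\Gamma$-compactness: for $\P$-a.e.\ $\omega\in\Omega$ and every sequence $\e_n\downarrow 0$ one extracts, via a diagonal argument over a countable dense class of Lipschitz subsets $A$ of $D$, a subsequence along which the $\Gamma$-limit of $u\mapsto E_{g,\e_n}(u,A)$ exists on $BV(D;\M)$ for every such $A$. Thanks to the uniform bounds on $g$ (Definition~\ref{defadmissible}), the candidate limit $E_\infty(\omega,u,A)$ is $\P$-a.s.\ finite, local, $L^1$-lower semicontinuous, and sandwiched between two multiples of $\Hd(A\cap S_u)$. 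These are exactly the hypotheses under which the $BV(\cdot;\M)$-version of the integral representation theorem of~\cite{BFLM} applies, producing a jointly measurable density $\hat g(\omega,x,a,b,\nu)$ such that
\[
E_\infty(\omega,u,A)=\int_{A\cap S_u}\hat g(\omega,x,u^+,u^-,\nu_u)\dHd.
\]

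Second, I would identify $\hat g$ with the cell formula. By a standard blow-up argument, for $\Hd$-a.e.\ $x_0\in D$ and every $(a,b,\nu)$,
\[
\hat g(\omega,x_0,a,b,\nu)=\lim_{\rho\to 0}\rho^{1-d}\inf\left\{E_\infty(\omega,u,\Qnxr):u=\uxzn\text{ near }\partial\Qnxr,\ u\in BV(\Qnxr;\M)\right\}\!,
\]
and the same formula holds (up to asymptotically negligible errors) at the $\e_n$-level. A change of variables $y=(x-x_0)/\e_n$ converts the $\e_n$-minimum problem into an infimum of the unscaled energy $E_g(\omega,\cdot)$ on a cube of side $\rho/\e_n$ with boundary datum $u^{a,b,\nu}_{x_0/\e_n}$. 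Setting
\[
\mu_{a,b,\nu}(\omega,A)\defas\inf\left\{\int_{A\cap S_u}g(\omega,y,u^+-u^-,\nu_u)\dHd:u=\uzn\text{ near }\partial A,\ u\in BV(A;\M)\right\}\!,
\]
stationarity of $g$ transfers to $\mu_{a,b,\nu}$, and subadditivity follows by gluing $\M$-valued minimizers on adjacent hyperrectangles sharing the boundary trace $\uzn$. The Akcoglu--Krengel multiparameter subadditive ergodic theorem then yields the $\P$-a.s.\ existence of $\lim_{t\to\infty}t^{1-d}\mu_{a,b,\nu}(\omega,tQ^\nu)$, which is identified with $g_{\rm hom}(\omega,a,b,\nu)$ (cf.~\eqref{eq:cellformula}) and is independent of $x_0$. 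Since the limit does not depend on the approximating subsequence, the whole family $\Gamma$-converges. When the action of the translation group on $(\Omega,\mathcal F,\P)$ is ergodic, the same theorem delivers a deterministic $g_{\rm hom}$.

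The main obstacle is the subadditivity/gluing step within the constrained class $BV(\cdot;\M)$: two minimizers of the cell problem on adjacent hyperrectangles with common boundary trace $\uzn$ must be pasted without introducing values outside $\M$. Unlike the real-valued free-discontinuity setting of~\cite{CDMSZ19}, neither convolution nor convex interpolation in the transition zone is available; instead the gluing must exploit the fact that $\uzn$ is piecewise constant across the hyperplane $\Hn$ and use the uniform $L^\infty$-bound on $g$ to absorb the contribution of the thin transition layer into an $o(\rho^{d-1})$ error, following the ideas of~\cite{AmBrII}. Once this quantitative gluing is established, the remaining ingredients — joint measurability of the cell formula in $\omega$, and independence of $\hat g$ from the approximating subsequence — are standard in stochastic homogenization.
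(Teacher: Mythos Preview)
Your proposal is correct and coincides with the route the paper itself sketches (integral representation via~\cite{BFLM} in the spirit of~\cite{AmBrI}, followed by the subadditive ergodic argument of~\cite{CDMSZ19} restricted to $\M$-valued functions). One remark: the gluing step you single out as the main obstacle is in fact trivial here, because competitors in $\Adm(\uzn,\cdot)$ coincide with $\uzn$ on a full \emph{neighborhood} of the boundary, not merely in trace; hence minimizers on adjacent hyperrectangles already agree on an open set containing the common face and their juxtaposition lies automatically in $BV(\cdot;\M)$ with no transition layer or extra surface contribution --- this is precisely how subadditivity is handled in the paper's Lemma~\ref{l.ellfixed}.
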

}
%
The effective integrand $g_{\rm hom}$ in~\eqref{intro:Ehom} does not depend on the spatial variable (but, depending on the set $\mathcal{M}$, it can loose the structural dependence on $u^+-u^-$). Moreover, it is given by an asymptotic minimization problem involving boundary conditions on larger and larger cubes. More precisely, denoting by $\uzn\in BV_{\rm loc}(\R^d,\mathcal{M})$ the function
\begin{equation*}
	\uzn(x)=\begin{cases}
		b &\mbox{if $\langle x,\nu\rangle>0$,}\\
		a &\mbox{otherwise,}
	\end{cases}
\end{equation*}
the integrand $g_{\rm hom}(\w,a,b,\nu)$ is given by the following limit, which in particular exists almost surely (see also Theorem~\ref{t.compressedprocess}):
\begin{equation}\label{eq:cellformula}
g_{\rm hom}(\w,a,b,\nu)=\lim_{t\to +\infty}\frac{1}{t^{d-1}}\inf\left\{E_{g,1}(u,tQ^{\nu}):\,u=\uzn\text{ in a neighborhood of }\partial tQ^{\nu}\right\},
\end{equation}
where $Q^{\nu}$ is a unit cube with two sides orthogonal to $\nu$. 

\smallskip
The aim of the present paper is to analyze the asymptotic formula in~\eqref{eq:cellformula} and give some quantitative error estimates when the size of the box grows. Clearly, in order to obtain quantitative information, mere stationarity will not suffice and we certainly have to focus on the ergodic setting, so that from now on we assume in this introduction that $g_{\rm hom}$ is deterministic. Denoting the value of the normalized infimum in \eqref{eq:cellformula} for fixed $t$ by $X_{t,t}^{a,b,\nu}(g)(\w)$ (the double index $t,t$ will become clear in a second), the error $X_{t,t}^{a,b,\nu}(g)(\w)-g_{\rm hom}(a,b,\nu)$ naturally splits into two parts, a deterministic error and and the fluctuations of $X_{t,t}^{a,b,\nu}(g)$. More precisely, we can write
\begin{equation}\label{intro:error}
X_{t,t}^{a,b,\nu}(g)(\w)-g_{\rm hom}(a,b,\nu)=\underbrace{X_{t,t}^{a,b,\nu}(g)(\w)-\mathbb{E}[X_{t,t}^{a,b,\nu}(g)]}_{\text{fluctuations}}+\underbrace{\mathbb{E}[X_{t,t}^{a,b,\nu}(g)]-g_{\rm hom}(a,b,\nu)}_{\text{deterministic error}}.
\end{equation}
In this paper we give quantitative estimates for the fluctuations assuming that the integrand $g$ in~\eqref{intro:def-energy} satisfies a multiscale functional inequality \cite{DG1,DG2} of the form
\begin{equation*}
	{\rm Var}(X(g))\leq C\,\mathbb{E}\left[\int_0^{\infty}\int_{\R^d}\left(\partial^{\rm osc}_{g,B_{s+1}(x)}X(g)\right)^2\,\mathrm{d}x\,(s+1)^{-d}\pi(s)\,\mathrm{d}s\right],
\end{equation*}
for all measurable functions $X$, a non-negative weight $\pi$ and the so-called oscillation $\partial^{\rm osc}_{g,B_{s+1}(x)}X(g)$. The latter measures the sensitivity of $X(g)$ with respect to local perturbations of $g$ on $B_{s+1}(x)$ (see Assumption \ref{ass:2} for the details and Remark \ref{r.onMSG} for further comments).
Under this additional assumption, one can show that
\begin{equation*}
\E\big[\big|X_{t,t}^{a,b,\nu}(g)-\E[X_{t,t}^{a,b,\nu}(g)]\big|^{2p}\big]\leq (C p^2)^p t^{p(2-d)}\int_0^{+\infty}(s+1)^{2p(d-1)}\pi(s)\ds\quad\text{for any}\ p\geq 1.
\end{equation*}
However, this estimate does not imply any decay in $t$ in dimension $2$. For this reason, we first move a step back and characterize $g_{\rm hom}$ in a way which is more convenient for our purpose. Namely, we prove that for any fixed $\nu\in\Sd$, in~\eqref{eq:cellformula} one can reduce the height of the cubes $tQ^{\nu}$ (\ie the side length along the direction $\nu$) to an arbitrarily slowly diverging sequence $\ell_t$. Denoting the corresponding value of the minimization problem by $X_{t,\ell_t}^{a,b,\nu}(g)(\w)$ (see Section~\ref{s.quantities} for a precise definition), we show that as long as $\ell_t\leq t$ diverges, it holds that
\begin{equation}\label{intro:planelike}
g_{\rm hom}(\w,a,b,\nu)=\lim_{t\to +\infty}X_{t,\ell_t}^{a,b,\nu}(g)(\w)	
\end{equation}
almost surely. Note that this result just requires $\R^d$-stationarity (the case of $\mathbb{Z}^d$-stationarity causes problems along irrational directions, see Section \ref{s.Zd} for how to circumvent this issue under additional assumptions). With the formula using the flat hyperrectangles (we refer to it also as almost planelike formula) we can improve the estimate on the fluctuations to
\begin{equation}\label{intro:fluctuations}
	\E\big[\big|X_{t,\ell_t}^{a,b,\nu}(g)-\E[X_{t,\ell_t}^{a,b,\nu}(g)]\big|^{2p}\big]\leq (C p^2)^p t^{p(1-d)}\ell_t^p\int_0^{+\infty}(s+1)^{2p(d-1)}\pi(s)\ds,
\end{equation}
which decays in any dimension $d\geq 2$ when $\ell_t$ grows much slower than $t$. Depending on the decay of the weight this bound can be summed with respect to $p$ and we obtain strong concentration estimates for the fluctuations. We present them in detail for an exponentially decaying weight in Corollary \ref{c.expweight}.

\smallskip
Characterizing $g_{\rm hom}$ via the almost planelike formula in~\eqref{intro:planelike} is of interest on its own, in particular in comparison with the periodic setting in the case of two phases.
Namely, when the function $g$ is deterministic and $1$-periodic in the spatial variable and $\M$ contains only two values, Caffarelli and de la Llave have shown in \cite{CadlL} under a mild continuity and ellipticity assumption that there always exist so-called planelike minimizers, that means, the jump set stays in a uniformly bounded neighborhood of the hyperplane orthogonal to $\nu$.
As a consequence, in this setting the (deterministic) limit in~\eqref{intro:planelike} still holds when the height $\ell_t$ does not diverge, but is uniformly bounded.
However, in the random setting such a property is not expected to hold. Indeed, in Section~\ref{s.nonplanelike} we construct an example of a stationary, ergodic integrand $g$ such that for any fixed $\ell\in\N$ the corresponding limit of $X_{t,\ell}^{a,b,\nu}(g)(\w)$ as $t\to +\infty$ is strictly smaller than $g_{\rm hom}$. Moreover, for first passage percolation in dimension two (which is equivalent to a lattice-based version of the problem defining $X_{t,t}^{a,b,\nu}(g)(\w)$ with two phases) it is expected that the maximal deviation from the straight line connecting $0$ and $n\nu^{\perp}$ is of the order $n^{2/3}$ (see \cite[Section 4.2]{50years} and references therein). Note that our result, which extends to discrete models without significant changes, is no contradiction to this conjecture, since we only speak about the minimal energy value instead of absolute minimizers. 

\smallskip
We close this introduction by briefly commenting on our result together with our choice of methods, its limitations and possible future problems.
The strong concentration estimates for the fluctuations obtained in the present paper allow to control the probabilistic error in~\eqref{intro:error}. Instead, a quantitative estimate for the deterministic error in~\eqref{intro:error} is beyond the scope of this paper. In fact, such a control seems to be a rather difficult issue for subadditive processes and one of the few general methods seems to be the theory developed in \cite{Alex}. However, the assumptions therein are not well-adaptable to random interfaces except in dimension two where the duality to paths can be used.
Let us also mention that in the case of two phases, there is a similar problem to the minimization problem defining $X_{t,\ell_t}^{a,b,\nu}$, which consists of finding the maximal flow/minimal cut between the upper and lower parts of the boundary of the cube with iid weighted edges given by nearest neighbors in the integer lattice $\mathbb{Z}^d$ (the problems are slightly different since the minimal cut does not have to be the discontinuity set of a function). Under much weaker moment conditions on the weights than uniform boundedness from above and below, fluctuation estimates on the energy of a minimal cut were obtained for instance in \cite{RoThe,The}. While parts of the analysis seems to be adaptable to our continuum model assuming a finite range of dependence, some estimates use the independence assumption through a logarithmic Sobolev inequality relying on \cite{BoLuMa}. For a continuum model, requiring a logarithmic Sobolev inequality and finite range of dependence seems quite restrictive. Moreover, as shown in \cite{DG2}, many physical models satisfy a weighted functional inequality since they are transformations of product structures.
Finally, note that in contrast to uniformly convex problems, quantitative results on the energy of minimizers do not imply any quantitative estimates on the convergence rate of the minimizers of interfacial-type energies. The latter seems to be a very challenging problem for the future.

\section{Preliminaries and notation}\label{s.preliminaries}
\subsection{General notation}
We first introduce some notation that will be used in this paper. Given a measurable set $A\subset\R^d$ we denote by $|A|$ its $d$-dimensional Lebesgue measure, and by $\mathcal{H}^{k}(A)$ its $k$-dimensional Hausdorff measure. For $x\in\R^d$ we denote by $|x|$ the Euclidean norm and $\Brx$ denotes the open ball with radius $\rho>0$ centered at $x_0\in\R^d$. If $x_0=0$ we simply write $\Br$.
Given $x_0\in\R^d$ and $\nu\in\Sph^{d-1}$ we let $\Hnx$ be the hyperplane orthogonal to $\nu$ and passing through $x_0$ and for every $(a,b)\in\R^m\times\R^m$, the piecewise constant function taking values $a,b$ and jumping across $\Hnx$ is denoted by $u_{x_0}^{a,b,\nu}:\R^{d}\to\R^{m}$,\ie
\begin{equation}\label{eq:purejump}
u_{x_0}^{a,b,\nu}(x):=\begin{cases} b &\mbox{if $\langle x-x_0,\nu\rangle >0$,}
\\
a &\mbox{otherwise,}
\end{cases}
\end{equation}
where the brackets $\langle\cdot,\cdot\rangle$ denote the standard scalar product. If $x_0=0$ we write $\Hn$ and $\uzn$ in place of $H^\nu(0)$ and $u_{0}^{a,b,\nu}$, respectively.
Let $\{e_1,\ldots,e_d\}$ be the standard basis of $\R^d$. Then $O_{\nu}$ is the orthogonal matrix induced by the linear mapping
\begin{equation}\label{eq:matrix}
x\mapsto \begin{cases}
\displaystyle{2\frac{\langle x,\nu+e_d\rangle}{|\nu+e_d|^2}(\nu+e_d)-x} &\mbox{if $\nu\in \mathbb{S}^{d-1}\setminus\{-e_d\}$,}
\\
-x &\mbox{otherwise.}
\end{cases}
\end{equation}
In this way, $O_\nu e_d=\nu$ and the set $\{\nu_j\defas O_\nu e_j\colon j=1,\ldots, d-1\}$ is an orthonormal basis for $\Hn$. 
Setting $\nu_d=\nu$, we define the cube $\Qn$ as
\begin{equation}\label{eq:defcube}
\Qn=\left\{x\in\R^d\colon |\langle x,\nu_j\rangle| <1/2\text{ for }j=1,\ldots,d\right\},
\end{equation}
and we set $\Qnxr=x_0+\rho \Qn$. 
%
%
%

Finally, the letter $C$ stands for a generic positive constant that may change every time it appears.
\subsection{BV-functions}

The relevant function space in this paper is the space of \emph{finite partitions},\ie the space of \emph{functions of bounded variation} taking only finitely many values. More precisely, we denote by $BV(D;\R^m)$ the space of all functions $u\in L^1(D;\R^m)$ whose distributional derivative $Du$ is a matrix-valued Radon measure. Moreover, given $\M\subset\R^m$, we set $BV(D;\M)\defas\{u\in BV(D;\R^m)\colon u(x)\in\M\text{ a.e.\! in }D\}$. If $\M$ is finite, then $Du$ can be represented as $Du(B)=\int_{B\cap S_u}(u^+(x)-u^-(x))\otimes\nu_u(x)\dHd$ for any Borel set $B\subset D$. Here $S_u$ is the so-called jumpset of $u$, which is $\Hd$-rectifiable and coincides $\Hd$-a.e. with the complement in $D$ of Lebesgue points of $u$. Moreover, $\nu_u(x)$ is the measure-theoretic normal to $S_u$ and $u^+(x),u^-(x)$ are the traces on both sides of $S_u$. We refer the reader to~\cite{AFP} for more details on functions of bounded variation.
\subsection{Boundedness and probabilistic assumptions}
In this subsection we give the precise assumptions we make on the random integrand. We start fixing some notation in the deterministic setting. Namely, for a given parameter $c\geq 1$ we denote by $\A_c$ the class of all Borel measurable functions $g:\R^d\times \mathbb\R^m\times \mathbb{S}^{d-1}\to [0,+\infty)$ satisfying
\begin{equation}\label{cond:Ac}
\frac{1}{c}\leq g(x,\zeta,\nu)\leq c,\quad \text{and}\quad g(x,\zeta,\nu)=g(x,-\zeta,-\nu)\,,  
\end{equation}
for every $(x,\zeta,\nu)\in\R^d\times \mathbb\R^m\times \mathbb{S}^{d-1}$.
To any $g\in\A_c$ and $D\subset\R^d$ open with Lipschitz boundary we associate a functional $\Eg(\cdot,D)$ defined on partitions by setting $\Eg(\cdot,D):L^1_{\rm loc}(\R^d;\R^m)\to[0,+\infty]$,
\begin{equation}\label{def:energy-deterministic}
\Eg(u,D)\defas
\begin{cases}
\displaystyle \int_{D\cap S_u} g(x,u^+-u^-,\nu_u)\dHd &\text{if }u\in BV(D;\M),\\
+\infty &\text{otherwise in }L^1_{\rm loc}(\R^d;\R^m).
\end{cases}
\end{equation}
Here $\mathcal{M}\subset\R^m$ is a finite set that we fix throughout this paper. Note that $\Eg$ is well-defined for $g\in\A_c$ thanks to the second condition in~\eqref{cond:Ac} and the fact that the triple $(u^+(x),u^-(x),\nu_u(x))$ is uniquely defined up to a permutation in $(u^+(x),u^-(x))$ and a simultaneous change of sign in $\nu_u(x)$.

We are now in a position to rigorously introduce the random setting.
Throughout the paper $(\Omega,\F,\P)$ denotes a complete probability space.
\begin{definition}[Admissible surface tensions]\label{defadmissible} 
We say that a function $g:\Omega\times\R^d\times\R^m\times\mathbb{S}^{d-1}\to [0,+\infty)$ is an admissible random surface tension, if it is jointly measurable and there exists $\geq 1$
such that for every $\omega\in\Omega$ the function $g(\omega):\R^d\times\R^m\times\Sph^{d-1}\to[0,+\infty)$, $g(\omega)\defas g(\omega,\cdot,\cdot,\cdot)$ belongs to $\mathcal{A}_c$.
\end{definition}
For any admissible random surface tension $g$ and for $\omega\in\Omega$ we set $\Egw\defas E_{g(\w)}$, where $E_{g(\w)}$ is defined according to~\eqref{def:energy-deterministic}, \ie $\Egw(u,D)=\int_{D\cap S_u}g(\w,x,u^+-u^-,\nu_u)\dHd$ for $D\subset\R^d$ open, $u\in BV(D;\M)$.

\medskip

We now introduce two further probabilistic assumptions. The first one concerns spatial stationarity of the integrand, while the second one is a multi-scale functional inequality (or weighted spectral gap). We start by recalling the notion of measure-preserving group actions.
\begin{definition}[Measure-preserving group action]\label{def:group-action} Let $k\in \N$, $k\geq 1$. A \emph{measure-preserving additive group action} on $(\Omega,\F,\P)$  is a family $\{\tau_z\}_{z\in\R^k}$ of mappings $\tau_z:\Omega\to\Omega$ satisfying the following properties:
\begin{enumerate}[label=(\arabic*)]
	\item\label{meas} (measurability) $\tau_z$ is $\F$-measurable for every $z\in\R^k$;
	\item\label{inv} (invariance) $\P(\tau_z A)=\P(A)$, for every $A\in\F$ and every $z\in\R^k$;
	\item\label{group} (group property) $\tau_0=\rm id_\Omega$ and $\tau_{z_1+z_2}=\tau_{z_2}\circ\tau_{z_1}$ for every $z_1,z_2\in\R^k$.
\end{enumerate}
If, in addition, $\{\tau_z\}_{z\in\R^k}$ satisfies the implication
\begin{equation*}
\mathbb{P}(\tau_zA\Delta A)=0\quad\forall\, z\in\R^k\implies \mathbb{P}(A)\in\{0,1\},
\end{equation*}
then it is called ergodic.
\end{definition}
\begin{remark}[Discrete measure-preserving group action]\label{r.group-action}
If in Definition~\ref{def:group-action} the space $\R^k$ is replaced by $\Z^k$, we say that the corresponding family $\{\tau_z\}_{z\in\Z^k}$ of mappings $\tau_z:\Omega\to\Omega$ satisfying~\ref{meas}--\ref{group} is a \emph{discrete} measure-preserving group action.
\end{remark}
We are now in a position to state our probabilistic assumptions on the random surface tension $g$.
\begin{assumption}\label{ass:1}
The admissible random surface tension is $\R^d$-stationary,\ie there exists a measure-preserving group action $\{\tau_{z}\}_{z\in\R^d}$ such that for all $\omega\in\Omega$ and for all $z\in\R^d$ it holds that
\begin{equation*}
g(\tau_z\w,x,\zeta,\nu)=g(\w,x+z,\zeta,\nu)\quad\quad\forall\, (x,\zeta,\nu)\in \R^d\times\R^m\times\mathbb{S}^{d-1}.
\end{equation*}
It is called ergodic, if it is stationary and the group action $\{\tau_z\}_{z\in\R^d}$ is ergodic. We refer to Assumption~1(E) if ergodicity holds.
\end{assumption}
\begin{assumption}\label{ass:2}
Let $\pi\in L^1((0,+\infty))$ be non-negative. The admissible random surface tension $g$ satisfies a multiscale functional inequality with weight $\pi$ and with respect to the oscillation,\ie for any function $X:\mathcal{A}_c\to\R$ such that $\w\mapsto (X(g))(\omega)\defas X(g(\w))$ is measurable we have 
\begin{equation*}
{\rm Var}(X(g))\leq C\,\mathbb{E}\left[\int_0^{\infty}\int_{\R^d}\left(\partial^{\rm osc}_{g,B_{s+1}(x)}X(g)\right)^2\,\mathrm{d}x\,(s+1)^{-d}\pi(s)\,\mathrm{d}s\right],
\end{equation*}
where the oscillation of $X$ with respect to $g$ on $U\subset\R^d$ is formally\footnote{As already noted in \cite{DG2} this definition is not measurable in general, so that one should define it either using the conditional essential supremum \cite{BaCaRe} or as the measurable envelope of the above definition. However, in this paper we will only use measurable bounds on the oscillation, so that these issues do not matter.} defined as
\begin{equation}\label{def:oscillation}
\begin{split}
\partial^{\rm osc}_{g,U}X(g)(\w):=&{\rm ess}\,\sup \{X(g')\colon g'\in\mathcal{A}_c,\,g'|_{(\R^d\setminus U)\times \R^m\times\mathbb{S}^{d-1}}=g(\w)|_{(\R^d\setminus U)\times \R^m\times\mathbb{S}^{d-1}}\}
\\
&-{\rm ess}\,\inf \{X(g')\colon g'\in\mathcal{A}_c,\,g'|_{(\R^d\setminus U)\times \R^m\times\mathbb{S}^{d-1}}=g(\w)|_{(\R^d\setminus U)\times \R^m\times\mathbb{S}^{d-1}}\}.
\end{split}
\end{equation}
\end{assumption} 
\begin{remark}\label{r.onMSG}
Our definition of multiscale functional inequality differs from \cite{DG1,DG2}, since we consider functions $g$ not just depending on $x$. However, to have some concrete examples we can consider surface tensions of the form $g(\w,x,\zeta,\nu)=a(\w,x)\phi(\zeta,\nu)$ with $a$ satisfying a multiscale functional inequality in the spirit of \cite{DG2}. We chose our framework to allow for more general dependencies that are not present in the homogenization of linear elliptic PDEs. 
\end{remark}
\subsection{Relevant quantities}\label{s.quantities}
We now introduce the quantities which are relevant for the analysis carried out in the present paper.

For $\nu\in \mathbb{S}^{d-1}$ let $O_\nu$ be the orthogonal matrix introduced in~\eqref{eq:matrix}, $\nu_j\defas O_\nu e_j$, $j=1,\ldots, d-1$, and for every $t,\ell>0$ set
\begin{equation}\label{eq:defrectangle}
\Rntl\defas \{x\in\R^d\colon |\langle x,\nu\rangle|<\ell/2,\ |\langle x,\nu_j\rangle|<t/2, j=1,\ldots,d-1\}.
\end{equation}
Let $g$ be an admissible random surface tension; for $t,\ell>0$, $(a,b)\in\M\times\M$, $\nu\in \mathbb{S}^{d-1}$ and for every $\w\in\Omega$ we introduce the quantity 
\begin{equation}\label{def:Xtl}
X_{t,\ell}^{a,b,\nu}(g)(\omega):=t^{1-d}\inf\big\{\Egw (u,\Rntl)\colon u\in\Adm(\uzn,\Rntl)\big\},
\end{equation}
where for every $D\subset\R^d$ open with Lipschitz boundary and $\bar{u}\in BV(D;\M)$ we set
\begin{equation}\label{def:adm-class}
\Adm(\bar u,D)\defas\{u\in BV(D;\M),\ u=\bar{u}\text{ in a neighborhood of }\partial D\}.
\end{equation}
\begin{remark}\label{r.Xtl}
Clearly, for $\ell=t$ we have $R_{t,t}^\nu=t\Qn$, so that in particular
\begin{equation}\label{def:Xtt}
X_{t,t}^{a,b,\nu}(g)(\omega)=t^{1-d}\inf\big\{\Egw (u,t\Qn)\colon u\in \Adm(\uzn,tQ^\nu)\big\}.
\end{equation}
Moreover, it is immediate to see that for every $(a,b)\in\M\times\M$, $t>0$ and $\omega\in\Omega$ the mapping $\ell\mapsto X_{t,\ell}^{a,b,\nu}(g)(\omega)$ is decreasing in $\ell$. In fact, if $\ell'\geq\ell>0$, then any competitor $u\in\Adm(\uzn,\Rntl)$ can be extended to a competitor $u\in\Adm(\uzn,R_{t,\ell'}^\nu)$ by setting $u\defas\uzn$ in $R_{t,\ell'}^\nu\setminus\Rntl$. Since $S_{\uzn}\cap(R_{t,\ell'}^\nu\setminus\Rntl)=\emptyset$, by definition of $\Egw$ we have $\Egw(u,R_{t,\ell'}^\nu)=\Egw(u,R_{t,\ell}^\nu)$ and we conclude by minimization that 
\begin{equation}\label{est:monotonicity-ell}
X_{t,\ell'}^{a,b,\nu}(g)(\omega)\leq X_{t,\ell}^{a,b,\nu}(g)(\omega)\quad\text{for every}\ \ell'\geq\ell>0.
\end{equation}
Using the same extension argument for fixed $\ell$, but different parameters $t,t'$, shows that the mapping $t\mapsto X_{t,\ell}^{a,b,\nu}(g)(\omega)$ is almost decreasing. Namely, using~\eqref{cond:Ac} we obtain
\begin{equation}\label{est:monotonicity-t}
X_{t',\ell}^{a,b,\nu}(g)(\omega)\leq \bigg(\frac{t}{t'}\bigg)^{d-1}X_{t,\ell}^{a,b,\nu}(g)(\omega)+c\frac{(t'-t)(t')^{d-2}}{(t')^{d-1}}\leq X_{t,\ell}^{a,b,\nu}(g)(\omega)+\frac{c(t'-t)}{t'}\quad\text{for every}\ t'\geq t.
\end{equation}
Finally, note that by testing the function $u=\uzn$ in the infimum problem, we deduce from the boundedness of $g$ that
\begin{equation}\label{eq:energybound}
	0\leq  X_{t,\ell}^{a,b,\nu}(g)(\w)\leq c
\end{equation}
uniformly in all parameters. Eventually, thanks to~\cite[Proposition A.1]{CDMSZ19}, for any admissible random surface tenstion $g$, the mapping $\w\mapsto X_{t,\ell}^{a,b,\nu}(g)(\w)$ is measurable. Thus, we can define the oscillation of $X_{t,\ell}^{a,b,\nu}$ according to~\eqref{def:oscillation} and use~\eqref{eq:energybound} with $g'$ in place of $g$ to obtain the immediate bound
\begin{equation}\label{est:oscillation-trivial}
\partial_{g,B_{s+1}(x)}^{\rm osc}X_{t,\ell}^{a,b,\nu}(g)(\w)\leq 2c,
\end{equation}
for any $s>0$ and $x\in\R^d$.
\end{remark}
\section{Statement of the main results}
In this section we present our main results. The corresponding proofs are postponed to Section \ref{s.proofs}. Our first result provides a simpler formula to compute the asymptotic surface tension $g_{\rm hom}$. More precisely, we show that instead of the full cube $tQ^{\nu}$, we can reduce the size in the direction $\nu$ taking an arbitrary slow diverging sequence $\ell_t$ instead of $t$. This result is also interesting from a numerical point of view.
\begin{theorem}\label{t.compressedprocess}
Let $g:\Omega\times\R^d\times\R^m\times\Sph^{d-1}\to[0,+\infty)$ be an admissible random surface tension satisfying Assumption \ref{ass:1}. Then there exists a function $g_{\rm hom}:\Omega\times\R^m\times\R^m\times\Sph^{d-1}\to[0,+\infty)$ such that almost surely for all $(a,b)\in\M\times\M$, $\nu\in\Sph^{d-1}$ we have
\begin{equation}\label{ex:limit:cubes}
\lim_{t\to+\infty} X_{t,t}^{a,b,\nu}(g)(\omega)=g_{\rm hom}(\omega,a,b,\nu).
\end{equation}
In particular, the limit in~\eqref{ex:limit:cubes} extists. Moreover, $g_{\rm hom}$ is $\{\tau_z\}_{z\in\R^d}$ invariant. If, in addition, $\{\tau_z\}_{z\in\R^d}$ is ergodic, then $g_{\rm hom}$ is deterministic.
Finally, let $\nu\in\Sph^{d-1}$ be fixed; then almost surely, for every $(a,b)\in\M\times\M$ we have
\begin{equation}\label{ex:limit:compressed}
\lim_{t\to +\infty}X_{t,\ell_t}^{a,b,\nu}(g)(\omega)=g_{\rm hom}(\omega,a,b,\nu),
\end{equation}
under the assumption that $0<\ell_t\leq t$ satisfies $\displaystyle\lim_{t\to +\infty}\ell_t=+\infty$.	
\end{theorem}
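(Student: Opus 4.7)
The plan decomposes into three parts. For the cube limit~\eqref{ex:limit:cubes} and the stationarity/ergodicity claims for $g_{\rm hom}$, I would follow the by-now standard subadditive-ergodic-theorem route of~\cite{AmBrI,CDMSZ19}. The set function $A\mapsto\inf\{E_{g(\omega)}(u,A):u\in\Adm(\uzn,A)\}$, evaluated on translates of $t\Qn$, is subadditive (two competitors on adjacent cubes glue across their shared face because both equal $\uzn$ near the boundary) and $\R^d$-stationary. The multi-parameter Akcoglu--Krengel subadditive ergodic theorem, applied to the $(d-1)$ directions tangent to $\Hn$ and with normalization $t^{d-1}$, yields~\eqref{ex:limit:cubes} a.s.\ for each $(a,b,\nu)$ in a countable dense subset. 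The uniform bound~\eqref{eq:energybound} together with a standard continuity estimate for $\nu\mapsto X_{t,t}^{a,b,\nu}(g)(\omega)$ then extends this to every $\nu\in\Sd$ on a single null set; stationarity of $\{\tau_z\}$ makes $g_{\rm hom}$ $\tau$-invariant, and ergodicity makes it deterministic.

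\medskip

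For the compressed formula~\eqref{ex:limit:compressed}, the lower bound $\liminf_{t\to+\infty}X_{t,\ell_t}^{a,b,\nu}(g)(\omega)\ge g_{\rm hom}(\omega,a,b,\nu)$ is immediate from the monotonicity~\eqref{est:monotonicity-ell} combined with the cube limit. The heart of the proof is the matching upper bound, which I propose to establish by a two-scale tiling construction. Fix $L>0$ large and consider $t$ large enough that $\ell_t\ge L$. Place points $\{y_k\}_{k=1}^{K_L}$ on the lattice $L(\Z\nu_1\oplus\cdots\oplus\Z\nu_{d-1})\subset\Hn$ with $K_L=\lfloor t/L\rfloor^{d-1}$, so that the cubes $y_k+L\Qn$ are pairwise disjoint and strictly contained in $R_{t,\ell_t}^\nu$. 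Since $y_k\in\Hn$, the jump function $u_{y_k}^{a,b,\nu}$ coincides with $\uzn$, so by $\R^d$-stationarity of $g$ and a standard measurable-selection argument one can pick, for each $k$, a competitor $u_k\in\Adm(\uzn,y_k+L\Qn)$ satisfying
\begin{equation*}
E_{g(\omega)}(u_k,y_k+L\Qn)\le L^{d-1}\bigl(X_{L,L}^{a,b,\nu}(g)(\tau_{y_k}\omega)+\e\bigr).
\end{equation*}
The glued function equal to $u_k$ on each $y_k+L\Qn$ and to $\uzn$ elsewhere in $R_{t,\ell_t}^\nu$ is admissible (all boundary traces match), and its jumpset in the uncovered region lies on $\Hn$ with $\Hd$-measure $O(L\,t^{d-2})$. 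Dividing by $t^{d-1}$ yields
\begin{equation*}
X_{t,\ell_t}^{a,b,\nu}(g)(\omega)\le \frac{L^{d-1}}{t^{d-1}}\sum_{k=1}^{K_L} X_{L,L}^{a,b,\nu}(g)(\tau_{y_k}\omega)+\e+c\,\frac{L}{t}.
\end{equation*}

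\medskip

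Letting $t\to+\infty$ with $L$ fixed, the prefactor $L^{d-1}K_L/t^{d-1}$ tends to $1$ and the multi-dimensional Birkhoff theorem applied to the $\Z^{d-1}$-subgroup action $\{\tau_{\sum_j k_j L\nu_j}\}_{k\in\Z^{d-1}}$ gives a.s.\ convergence of the average to $\E[X_{L,L}^{a,b,\nu}(g)\,|\,\mathcal{I}_L](\omega)$, where $\mathcal{I}_L$ is the $\sigma$-algebra of events invariant under that sublattice. Sending $\e\to 0$ gives, a.s.,
\begin{equation*}
\limsup_{t\to+\infty} X_{t,\ell_t}^{a,b,\nu}(g)(\omega)\le \E\bigl[X_{L,L}^{a,b,\nu}(g)\,\big|\,\mathcal{I}_L\bigr](\omega).
\end{equation*}
Now $g_{\rm hom}(\cdot,a,b,\nu)$ is $\R^d$-invariant and hence $\mathcal{I}_L$-measurable for every $L$, so the a.s.\ convergence $X_{L,L}^{a,b,\nu}(g)\to g_{\rm hom}$ together with the uniform bound~\eqref{eq:energybound} and Jensen's inequality imply that $\|\E[X_{L,L}^{a,b,\nu}(g)\,|\,\mathcal{I}_L]-g_{\rm hom}(\cdot,a,b,\nu)\|_{L^1(\P)}\to 0$ as $L\to+\infty$. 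Extracting an a.s.\ convergent subsequence in $L$, and intersecting with the (finite) union of null sets over $(a,b)\in\M\times\M$, closes the argument.

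\medskip

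The main conceptual obstacle is precisely this last step: without assuming ergodicity of $\{\tau_z\}_{z\in\R^d}$, the Birkhoff limit on the sublattice produces a conditional expectation rather than a constant, and it is the already-established $\R^d$-invariance of $g_{\rm hom}$ (which forces $\E[g_{\rm hom}\,|\,\mathcal{I}_L]=g_{\rm hom}$) that makes the $L^1$-convergence above sufficient to conclude. The geometric step is equally delicate: the grid points must lie on $\Hn$ itself so that the local jump functions $u_{y_k}^{a,b,\nu}$ agree with the global reference $\uzn$ and the gluing is admissible — this is what prevents one from simply tiling with a $d$-dimensional lattice of cubes.
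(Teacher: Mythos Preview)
Your proposal is correct, and for the cube limit~\eqref{ex:limit:cubes} it matches the paper's argument essentially verbatim. For the compressed formula~\eqref{ex:limit:compressed}, however, you take a genuinely different route.

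The paper first introduces, for each fixed $\ell\in\N$, an auxiliary subadditive process $\mu_\ell^{a,b,\nu}$ on $(d-1)$-dimensional intervals and applies the subadditive ergodic theorem to obtain a limit $g_\ell(\omega,a,b,\nu)=\lim_t X_{t,\ell}^{a,b,\nu}(g)(\omega)$. It then shows $g_\ell\downarrow g_\infty=g_{\rm hom}$ almost surely by expressing $g_\ell$ as a conditional expectation with respect to the $\sigma$-algebra $\mathcal{F}^\nu$ of $\{\tau^\nu_z\}_{z\in\R^{d-1}}$-invariant sets, proving that $k\mapsto (2^{k+1})^{1-d}\E[\mu_\ell([-2^k,2^k)^{d-1},\cdot)\mid\mathcal{F}^\nu]$ is decreasing (via subadditivity and a short lemma on $\tau$-invariance of conditional expectations), and then \emph{swapping the two infima} in $k$ and $\ell$. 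The key observation is that for fixed $k$ one has $\mu_\ell=\mu_\infty$ once $\ell>2^{k+1}$, so the inner limit in $\ell$ is trivially identified. The final sandwich $X_{t,t}\le X_{t,\ell_t}\le X_{t,\ell}$ then closes the argument without any subsequence extraction.

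Your approach bypasses the auxiliary objects $g_\ell$ entirely: you tile $R^\nu_{t,\ell_t}$ directly by $L$-cubes along $H^\nu$, invoke the \emph{additive} Birkhoff theorem for the sublattice action, and pass to the limit $L\to\infty$ via $L^1$-contractivity of conditional expectation followed by a subsequence extraction. This is arguably more elementary (Birkhoff rather than a second application of the subadditive theorem) and more direct for the stated goal. The price is the subsequence step, which the paper's monotone double-infimum argument avoids; conversely, the paper's route produces the intermediate quantities $g_\ell$ as a byproduct, and these are used independently elsewhere (notably in Example~\ref{ex.ex} and in the monotonicity~\eqref{monotonicity-gell}). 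Two minor points: your ``measurable-selection argument'' is unnecessary since you work at fixed $\omega$; and what you call Jensen is really the $L^1$-contractivity $\|\E[\,\cdot\mid\mathcal G]\|_{L^1}\le\|\cdot\|_{L^1}$.
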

\begin{remark}\label{r.mainresult}
	\begin{enumerate}[leftmargin=2em, label=(\roman*)]
		\item\label{rem:rational} The exceptional set where the convergence in \eqref{ex:limit:compressed} might fail may depend on $\nu$, but not on the sequence $\ell_t$. 
In the particular case $\ell_t=t$ it is possible to find a set of full probability where the convergence in~\eqref{ex:limit:cubes} holds for all directions $\nu$. Indeed, in this case it is sufficient to establish~\eqref{ex:limit:cubes} on a countable dense subset of $\Sd$ and extend the convergence to all directions via a deterministic continuity argument (see,\eg \cite[Lemma 5.5]{CDMSZ19}).
Choosing the flat hyperrectangles rules out this possibility. 
This poses additional problems when the medium satisfies only discrete stationarity (as discrete environments on a lattice), since in this case we are only able to prove~\eqref{ex:limit:compressed} for rational directions. However, under Assumption \ref{ass:2} we obtain a slightly weaker version of~\eqref{ex:limit:compressed} also in the case of $\mathbb{Z}^d$-stationarity (see Section \ref{s.Zd} and Corollary~\ref{c.asirrational}). The limit in \eqref{ex:limit:cubes} holds without any extra assumption for $\Z^d$-stationary models thanks to the above mentioned continuity argument.
		\item Due to \eqref{eq:energybound} the almost sure convergence implies convergence in $L^p(\Omega)$ for any $1\leq p<+\infty$. Denoting by $\mathcal{F}_{\rm inv}$ the $\sigma$-algebra of $\{\tau_z\}_{z\in\R^d}$-invariant sets, then by subadditivity and stationarity (cf. Lemma~\ref{l.ellfixed} and its proof) the conditional expectations satisfy $\mathbb{E}[X_{t,t}^{a,b,\nu}|\mathcal{F}_{\rm inv}](\w)\geq g_{\rm hom}(\w,a,b,\nu)$. Moreover, thanks to the monotinicity Property~\eqref{est:monotonicity-ell} we have
		\[\mathbb{E}[X_{t,\ell_t}^{a,b,\nu}|\mathcal{F}_{\rm inv}](\w)\geq\mathbb{E}[X_{t,t}^{a,b,\nu}|\mathcal{F}_{\rm inv}](\w)\geq g_{\rm hom}(\w,a,b,\nu).\]
		In the ergodic case the above estimate reduces to the expectation. 
		In this sense, the formula with the flat hyperrectangles produces a larger deterministic error, but allows at the same time to obtain concentration estimates for the fluctuations (cf. Corollary~\ref{c.expweight}).  
	\end{enumerate}
\end{remark}
Our next result gives a control of the variance (and higher moments) under the additional Assumption~\ref{ass:2}. Note that in dimension $2$ the flatness of the hyperrectangles is crucial to obtain a decay rate.
\begin{theorem}\label{t.concentration}
	Let $g$ be an admissible random surface tension satisfying Assumption \ref{ass:1}\footnote{Strictly speaking, Assumption \ref{ass:1} is not needed in the proof. However, usually Assumption \ref{ass:2} is established for at least $\Z^d$-stationary media.}  and \ref{ass:2}. Then there exists a constant $c_d>0$ such that for all $p\geq 1$ and every $(a,b)\in\M\times\M$, $\nu\in\Sd$ the estimate
	\begin{equation}\label{est:variance}
		\E\big[\big(X_{t,\ell_t}^{a,b,\nu}(g)-\E[X_{t,\ell_t}^{a,b,\nu}(g)]\big)^{2p}\big]\leq (c_d p^2)^p t^{p(1-d)}\ell_t^p\int_0^{+\infty}(s+1)^{2p(d-1)}\pi(s)\ds
	\end{equation}
	holds whenever $t\geq\ell_t\geq 1$. In particular,
	\begin{equation*}
		{\rm Var}\left(X_{t,\ell_t}^{a,b,\nu}(g)\right)\leq c_d t^{1-d}\ell_t\int_0^{+\infty}(s+1)^{2(d-1)}\pi(s)\ds.
	\end{equation*}
\end{theorem}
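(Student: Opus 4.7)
The proof combines a deterministic pointwise bound on the oscillation $\partial^{\rm osc}_{g,B_{s+1}(x)}X_{t,\ell_t}^{a,b,\nu}$ with the $L^{2p}$-version of the multiscale functional inequality that is induced by Assumption \ref{ass:2}, as developed in \cite{DG1,DG2}. In our setting one applies
\begin{equation*}
\mathbb{E}\big[|X-\mathbb{E}[X]|^{2p}\big]\leq (Cp)^{2p}\,\mathbb{E}\Big[\Big(\int_0^\infty\!\!\int_{\R^d}(\partial^{\rm osc}_{g,B_{s+1}(x)}X)^{2}\,\frac{\pi(s)}{(s+1)^d}\,dx\,ds\Big)^{p}\Big]
\end{equation*}
with $X=X_{t,\ell_t}^{a,b,\nu}(g)$. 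Since the forthcoming oscillation estimate will be purely deterministic, the outer expectation drops.

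For the oscillation itself, one may assume $B_{s+1}(x)\cap R_{t,\ell_t}^\nu\neq\emptyset$ (otherwise $\partial^{\rm osc}$ vanishes) and fix an $\varepsilon$-almost minimizer $u^\ast\in\Adm(\uzn,R_{t,\ell_t}^\nu)$ for $X_{t,\ell_t}^{a,b,\nu}(g)$. Setting
\begin{equation*}
	v\defas\begin{cases} a & \text{in } B_s(x)\cap R_{t,\ell_t}^\nu,\\ u^\ast & \text{in } R_{t,\ell_t}^\nu\setminus B_s(x),\end{cases}
\end{equation*}
produces an admissible competitor (for suitable radius in $(s,s+1)$ chosen via coarea so that the trace of $u^\ast$ on the sphere is well-defined) whose jump set inside $B_{s+1}(x)$ has $\mathcal{H}^{d-1}$-measure at most $C(s+1)^{d-1}+\mathcal{H}^{d-1}(S_{u^\ast}\cap B_{s+1}(x))$. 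A standard near-minimality comparison of $u^\ast$ with the fully filled-in competitor controls the second term again by $C'(s+1)^{d-1}+O(\varepsilon)$. For any $g'$ coinciding with $g$ outside $B_{s+1}(x)$ we then get
\begin{equation*}
	t^{d-1}X_{t,\ell_t}^{a,b,\nu}(g')\leq E_g(u^\ast,R_{t,\ell_t}^\nu)+C''(s+1)^{d-1}\leq t^{d-1}X_{t,\ell_t}^{a,b,\nu}(g)+C''(s+1)^{d-1}+\varepsilon.
\end{equation*}
Exchanging $g$ and $g'$, sending $\varepsilon\to 0$ and combining with the trivial bound \eqref{est:oscillation-trivial} yields
\begin{equation*}
	\partial^{\rm osc}_{g,B_{s+1}(x)}X_{t,\ell_t}^{a,b,\nu}\leq C\,\min\!\big(1,(s+1)^{d-1}/t^{d-1}\big)\,\mathbf{1}_{R_{t,\ell_t}^\nu+B_{s+1}}(x).
\end{equation*}

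Plugging this pointwise bound into the multiscale inequality and using the volume estimate $|R_{t,\ell_t}^\nu+B_{s+1}|\leq C(t+s+1)^{d-1}(\ell_t+s+1)$, a three-regime split in $s$ ($s+1\leq\ell_t$, $\ell_t\leq s+1\leq t$, and $s+1>t$) yields the first-order bound
\begin{equation*}
	\int_0^\infty\!\!\int_{\R^d}(\partial^{\rm osc})^2\,(s+1)^{-d}\,\pi(s)\,dx\,ds\leq c_d\,\frac{\ell_t}{t^{d-1}}\int_0^\infty (s+1)^{2(d-1)}\pi(s)\,ds,
\end{equation*}
which is precisely the $p=1$ statement of the theorem. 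Raising to the $p$-th power and applying H\"older's inequality on the $s$-integral rewrites $\big(\int(s+1)^{2(d-1)}\pi\,ds\big)^{p}$ as $\|\pi\|_1^{p-1}\int(s+1)^{2p(d-1)}\pi\,ds$; absorbing $\|\pi\|_1$ and the constants into $c_d^p$ and combining $(Cp)^{2p}$ with $c_d^p$ gives the claimed prefactor $(c_d p^2)^p$. The main obstacle is the oscillation estimate, in particular bounding $\mathcal{H}^{d-1}(S_{u^\ast}\cap B_{s+1}(x))$ by $(s+1)^{d-1}$ via near-minimality and dealing with balls that intersect the boundary layer of $R_{t,\ell_t}^\nu$ where the admissibility class fixes the trace (handled by restricting the filling to the interior part of the ball). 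The flatness of the rectangle, encoded in the factor $(\ell_t+s+1)$ in the volume bound, is essential to produce the decay $\ell_t/t^{d-1}$; for the full cube this factor would be $(t+s+1)$ and would not decay in dimension $d=2$.
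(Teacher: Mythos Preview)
Your outline is correct and reaches the stated bound, but the paper's execution is more direct in two places. For the oscillation estimate the paper fills the ball not with the constant $a$ but with the boundary datum $u^{a,b,\nu}$ itself, setting $\tilde u=u^\ast$ outside $\overline{B}_{2(s+1)}(x)$ and $\tilde u=u^{a,b,\nu}$ inside. This choice preserves admissibility automatically (your filling with $a$ may conflict with the boundary condition on the upper half of $\partial R^\nu_{t,\ell_t}$, a point you only gesture at), and the jump set of $\tilde u$ inside the closed ball is contained in $(H^\nu\cap\overline{B})\cup\partial B$, giving the bound $C(s+1)^{d-1}$ immediately; your near-minimality density argument for $\mathcal{H}^{d-1}(S_{u^\ast}\cap B_{s+1}(x))$ is then an avoidable detour. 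For the moment inequality the paper invokes the version of \cite[Proposition~1.10]{DG1} with the $p$-th power on the spatial integral only,
\[
\E\big[(X-\E X)^{2p}\big]\leq (Cp^2)^p\,\E\!\left[\int_0^\infty\!\Big(\int_{\R^d}(\partial^{\rm osc}_{g,B_{2(s+1)}(x)}X)^2\,dx\Big)^{p}(s+1)^{-dp}\pi(s)\,ds\right],
\]
combined with the single volume bound $|R^\nu_{t+4(s+1),\ell_t+4(s+1)}|\leq C(s+1)^d t^{d-1}\ell_t$, valid for all $s>0$ whenever $1\leq\ell_t\leq t$, so no three-regime split in $s$ is needed. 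One then has $\int(\partial^{\rm osc})^2\,dx\leq C\,t^{1-d}\ell_t(s+1)^{3d-2}$ deterministically, and the exponent arithmetic $p(3d-2)-dp=2p(d-1)$ closes the proof in one line, without your final H\"older step and the attendant $\|\pi\|_1^{p-1}$ factor in the constant.
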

We deduce asymptotic exponential concentration estimates for the fluctuations in case of an exponential decay of the weight $\pi$. This is the case for many physical models of random heterogeneities (cf. \cite[Section 3]{DG2}).
\begin{corollary}\label{c.expweight}
	Let $g$ be an admissible random surface tension satisfying Assumptions \ref{ass:1}(E) and \ref{ass:2}. Assume that the weight $\pi$ satisfies $\pi(s)\leq C\exp(-s/C)$ for some $C>0$. Then there exists $C_d>0$ such that for all $t\geq \ell_t\geq 1$ and $(a,b)\in\M\times\M$, $\nu\in\Sd$ we have
	\begin{equation*}
		\mathbb{E}\left[\exp\left(\frac{1}{C_d}\left|\frac{X_{t,\ell_t}^{a,b,\nu}(g)-\mathbb{E}[X_{t,\ell_t}^{a,b,\nu}(g)]}{\sqrt{t^{1-d}\ell_t}}\right|^{\frac{1}{d}}\right)\right]\leq 4.
	\end{equation*}
	In particular, for every $\eta>0$ we have
	\begin{equation*}
		\limsup_{t\to +\infty}\bigg((t^{1-d}\ell_t)^{\tfrac{1}{2d}} \log\Big(\mathbb{P}\big(|X_{t,\ell_t}^{a,b,\nu}(g)-g_{\rm hom}(a,b,\nu)|>\eta\big)\Big)\bigg)<0.
	\end{equation*} 
	
\end{corollary}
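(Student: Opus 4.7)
The plan is to combine Theorem~\ref{t.concentration} with the exponential decay of $\pi$ to obtain stretched-exponential integrability for the normalized fluctuation, and then to deduce the tail and limsup statements via Markov's inequality together with the ergodic convergence from Theorem~\ref{t.compressedprocess}.

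First I would bound the weight integral by a change of variables and the identity for the $\Gamma$-function:
\[
\int_0^{+\infty}(s+1)^{2p(d-1)}\pi(s)\ds \leq C\int_0^{+\infty}(s+1)^{2p(d-1)}e^{-s/C}\ds \leq \widetilde{C}^{\,2p(d-1)+1}\,\Gamma(2p(d-1)+1).
\]
Stirling's formula absorbs polynomial prefactors into a geometric factor in $p$, reducing the right-hand side to the form $(K_d)^p p^{2p(d-1)}$. Plugging this into \eqref{est:variance} and setting $Y_{t,\ell_t}\defas (X_{t,\ell_t}^{a,b,\nu}(g)-\E[X_{t,\ell_t}^{a,b,\nu}(g)])/\sqrt{t^{1-d}\ell_t}$, the exponents of $p$ match to yield $\E[Y_{t,\ell_t}^{2p}]^{1/(2p)}\leq K_d'\,p^d$ for every integer $p\geq 1$.

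Next, setting $Z\defas |Y_{t,\ell_t}|^{1/d}$ and interpolating between consecutive even exponents via Lyapunov's inequality, I would deduce $\E[Z^k]\leq (K_d'' k)^k$ for every integer $k\geq 1$. Expanding the exponential as a power series and using the elementary Stirling bound $k^k/k!\leq e^k$ then gives
\[
\E[\exp(\lambda Z)] = \sum_{k=0}^{+\infty}\frac{\lambda^k}{k!}\E[Z^k] \leq \sum_{k=0}^{+\infty}(\lambda K_d'' e)^k,
\]
which is bounded by $2\leq 4$ as soon as $\lambda K_d'' e \leq 1/2$. Choosing $C_d\defas 2K_d''e$ yields the first claim. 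By Markov's inequality applied to $\exp(|Y_{t,\ell_t}|^{1/d}/C_d)$, the tail bound
\[
\P\big(|X_{t,\ell_t}^{a,b,\nu}-\E X_{t,\ell_t}^{a,b,\nu}|>\eta\big) \leq 4\exp\Big(-\frac{\eta^{1/d}}{C_d(t^{1-d}\ell_t)^{1/(2d)}}\Big)
\]
follows. For the limsup, Theorem~\ref{t.compressedprocess} together with the uniform bound \eqref{eq:energybound} and dominated convergence (ergodic case) gives $\E[X_{t,\ell_t}^{a,b,\nu}(g)]\to g_{\rm hom}(a,b,\nu)$ as $t\to+\infty$; hence for $t$ large enough one has $|\E X_{t,\ell_t}^{a,b,\nu}-g_{\rm hom}|\leq \eta/2$, and the triangle inequality combined with the tail bound above, after taking logarithms and multiplying by $(t^{1-d}\ell_t)^{1/(2d)}\leq 1$, produces the claimed strictly negative limsup.

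The main obstacle is the careful bookkeeping of the constants in the moment estimate: the rate $p^d$ in $\E[Y_{t,\ell_t}^{2p}]^{1/(2p)}$ is precisely what dictates the exponent $1/d$ in the stretched exponential, so one has to verify that the prefactor $(c_d p^2)^p$ in \eqref{est:variance}, the factor $t^{p(1-d)}\ell_t^p$, and the $\Gamma$-type factorial coming from the weight integral combine to give exactly this rate with no spurious $p$-dependence escaping geometric control. Once the bound $\E[Y_{t,\ell_t}^{2p}]^{1/(2p)}\leq K_d'\,p^d$ is in place, the remaining Taylor-series manipulation and Markov-type arguments are standard.
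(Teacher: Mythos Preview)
Your approach is essentially the paper's own: bound the weight integral by a $\Gamma$-function, convert via Stirling to $(K_d)^p p^{2p(d-1)}$, combine with Theorem~\ref{t.concentration} to get moment growth of order $p^{2pd}$ for the normalized fluctuation, and then sum the Taylor series for the exponential. The paper avoids your Lyapunov interpolation by exploiting that \eqref{est:variance} already holds for all real $p\geq 1$ (so one can set $p_n=n/(2d)$ directly, using Jensen for $n<2d$), but this is purely cosmetic.

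There is one small gap in your final step. After taking logarithms and multiplying by $(t^{1-d}\ell_t)^{1/(2d)}$, the bound you actually obtain is
\[
(t^{1-d}\ell_t)^{1/(2d)}\log\P\big(|X_{t,\ell_t}^{a,b,\nu}-g_{\rm hom}|>\eta\big)\leq (t^{1-d}\ell_t)^{1/(2d)}\log 4 - \frac{(\eta/2)^{1/d}}{C_d}.
\]
If $t^{1-d}\ell_t\to 0$ (automatic for $d\geq 3$ since $\ell_t\leq t$) the first term on the right vanishes in the limit and you are done. But in dimension $d=2$ one may have $\ell_t/t\geq c>0$, and then your bound only gives $\limsup\leq \log 4 - (\eta/2)^{1/2}/C_d$, which is \emph{not} negative for small $\eta$. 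The paper closes this case separately: along any subsequence with $t^{1-d}\ell_t\geq c>0$, the convergence $X_{t,\ell_t}^{a,b,\nu}(g)\to g_{\rm hom}(a,b,\nu)$ in probability (from Theorem~\ref{t.compressedprocess}) forces $\log\P\to -\infty$, while $(t^{1-d}\ell_t)^{1/(2d)}\geq c^{1/(2d)}>0$, so the product tends to $-\infty$. You should add this case distinction.
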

\begin{remark}[On the (non)-optimality of the concentration estimates]\label{r.optimal}
In the proof of Theorem \ref{t.concentration} we estimate the oscillation of the process $X_{t,\ell_t}^{a,b,\nu}(g)$ for all balls $B_{s+1}(x)$ that intersect the hyperrectangle $R_{t,\ell_t}^{\nu}$, which then leads by integration to the factor $\ell_t$. It would suffice to consider all balls that intersect the jumpset of a minimizer for $X_{t,\ell_t}^{a,b,\nu}$.
However, there are two problems: in general, minimizers do not exist. Moreover, choosing an almost minimizer, one has then to estimate the measure of the $2(s+1)$-neighborhood of the jumpset for all $s>0$. If one assumes that the weight $\pi$ has compact support, then one could try to use the theory of Minkowski content. However, this needs to be done in a quantitative way since (almost) minimizers depend on $t$ and $s>0$ is not infinitesimal, but finite. For the moment this seems to be out of reach for non-smooth $x$-dependent integrands $g$. We remark that in a discrete setting, this approach seems more plausible since there one just has to estimate the number of edges used in a minimal interface (which is proportional to $t^{d-1}$ when the weights are uniformly bounded from above and below). Finally, even with the best possible assumption that the jumpset of an almost minimizer is flat, the improvement would be minor since the factor $\ell_t$ can diverge arbitrarily slow. Eventually, the exponent $\frac{1}{2d}$ in Corollary \ref{c.expweight} is due to two facts: the factor $2$ can be avoided if we use a functional derivative instead of the oscillation together with a logarithmic Sobolev inequality instead of the variance control via the spectral gap (see also \cite[Proposition 1.10 i)]{DG1}). The factor $d$ disappears when we assume that $\pi$ is bounded and has compact support. 
\end{remark}
\section{Further remarks}
In this section we discuss how our strategy has to be adapted if one assumes only stationarity with respect to integer translations in $\Z^d$ (cf. Remark~\ref{r.group-action}). This is particularly relevant for discrete interface models, which are often defined on the lattice $\Z^d$, where integer translations provide a natural framework. Moreover, we give an example of a stationary, ergodic integrand which shows that, in general, the assumption $\ell_t\to+\infty$ in Theorem \ref{t.compressedprocess} cannot be dropped. 
\subsection{On $\Z^d$-stationary integrands}\label{s.Zd} 
Assuming that stationarity of the random surface tension $g$ is only satisfied for a discrete measure preserving group action $\{\tau_z\}_{z\in\Z^d}$, our strategy of proof establishes the limit~\eqref{ex:limit:compressed} in Theorem \ref{t.compressedprocess} only for rational directions,\ie for $\nu\in \mathbb{S}^{d-1}\cap\Q^{d}$.
In contrast to the process $X_{t,t}^{a,b,\nu}(g)$, the hyperrectangles in the definition of $X_{t,\ell_t}^{a,b,\nu}$ do not allow to use deterministic continuity arguments to extend the convergence to irrational directions (cf. Remark~\ref{r.mainresult}~\ref{rem:rational}), since a small, but fixed rotation does not lead to a uniformly small perturbation of the thin hyperrectangles when $t$ grows. However, under Assumption \ref{ass:2} with a certain decay of the weight $\pi$ we can extend the convergence to all directions using Theorem~\ref{t.concentration}. Since Assumption \ref{ass:2} only allows to control the variance of random variables, we first need to prove the convergence of the expectation of the random variables $X_{t,\ell_t}^{a,b,\nu}(g)$. This will be achieved again under the sole assumption of stationarity. A similar approach has been used in \cite[Proposition 3.5]{RoThe} for a maximal flow model on $\Z^d$.
\begin{proposition}\label{p.expectation}
	Let $g:\Omega\times\R^d\times\R^m\times\Sph^{d-1}\to[0,+\infty)$ be an admissible random surface tension satisfying Assumption \ref{ass:1} with a measure preserving group action $\{\tau_z\}_{z\in\Z^d}$. Then for every $(a,b)\in\M\times\M$ and every $\nu\in\mathbb{S}^{d-1}$ it holds that
	\begin{equation*}
		\lim_{t\to +\infty}\mathbb{E}[X_{t,\ell_t}^{a,b,\nu}(g)]=\mathbb{E}[g_{\rm hom}(\cdot,a,b,\nu)]
	\end{equation*}
	under the assumption that $0<\ell_t\leq t$ satisfies $\lim_{t\to +\infty}\ell_t=+\infty$. If $g$ is ergodic, then $\mathbb{E}[g_{\rm hom}(\cdot,a,b,\nu)]=g_{\rm hom}(a,b,\nu)$ since $g_{\rm hom}$ is deterministic.
\end{proposition}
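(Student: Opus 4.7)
The plan is to treat rational and irrational directions separately, reducing the latter to the former via an approximation argument.

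For rational $\nu\in\Sph^{d-1}\cap\Q^d$, the proof strategy of Theorem \ref{t.compressedprocess}---as discussed at the beginning of Section \ref{s.Zd}---applies in the $\Z^d$-stationary setting and yields almost sure convergence $X_{t,\ell_t}^{a,b,\nu}(g)\to g_{\rm hom}(\cdot,a,b,\nu)$. Combined with the uniform bound from \eqref{eq:energybound}, dominated convergence upgrades this to convergence of the expectations. Moreover, applying the deterministic continuity argument of Remark \ref{r.mainresult}~\ref{rem:rational} to the cube process $X_{t,t}^{a,b,\cdot}(g)$---which still works under $\Z^d$-stationarity since it is a purely geometric comparison of the symmetric difference of two half-spaces, controlled by $|\nu_1-\nu_2|$---defines $g_{\rm hom}(\cdot,a,b,\nu)$ as an almost sure limit for every $\nu$ and yields (Lipschitz) continuity of $\nu\mapsto\E[g_{\rm hom}(\cdot,a,b,\nu)]$ on $\Sph^{d-1}$.

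For a general $\nu\in\Sph^{d-1}$, the monotonicity \eqref{est:monotonicity-ell} combined with the almost sure convergence of the cube process and dominated convergence gives $\liminf_t\E[X_{t,\ell_t}^{a,b,\nu}(g)]\geq\E[g_{\rm hom}(\cdot,a,b,\nu)]$. The matching upper bound is the heart of the proof: I would approximate $\nu$ by a sequence $(\nu_k)_k\subset\Sph^{d-1}\cap\Q^d$ with $\nu_k\to\nu$ and, for each $\omega$, construct a competitor $v$ for $X_{t,\ell_t}^{a,b,\nu}(g)(\omega)$ from an almost minimizer $u_k$ for the shifted problem $X_{\tilde t,\tilde\ell}^{a,b,\nu_k}(g)(\tau_{y_k}\omega)$, with appropriate $\tilde t\leq t$, $\tilde\ell\leq\ell_t$ and an integer translation $y_k\in\Z^d$. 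The construction sets $v=u_k(\cdot-y_k)$ on an inscribed shifted rectangle $y_k+\Rntl^{\nu_k}\subset R_{t,\ell_t}^\nu$ and $v=\uzn$ elsewhere in $R_{t,\ell_t}^\nu$. By $\Z^d$-stationarity the expectation of the inner energy equals $\tilde t^{d-1}\E[X_{\tilde t,\tilde\ell}^{a,b,\nu_k}(g)]$, whereas the extra jumpset across $\partial(y_k+R_{\tilde t,\tilde\ell}^{\nu_k})$---where $u_0^{a,b,\nu_k}(\cdot-y_k)$ differs from $\uzn$---lies in the wedge between $\Hn$ and $y_k+H^{\nu_k}$ and has $\Hd$-measure bounded by $C\bigl(t^{d-1}|\nu-\nu_k|+t^{d-2}\ell_t\bigr)$. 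Dividing by $t^{d-1}$, letting $t\to+\infty$, and using the rational case for $\nu_k$ yields
\[
\limsup_{t\to+\infty}\E[X_{t,\ell_t}^{a,b,\nu}(g)]\leq\E[g_{\rm hom}(\cdot,a,b,\nu_k)]+C|\nu-\nu_k|;
\]
sending $k\to+\infty$ via the continuity established above completes the proof. In the ergodic case $g_{\rm hom}$ is deterministic by Theorem \ref{t.compressedprocess}, so $\E[g_{\rm hom}(\cdot,a,b,\nu)]=g_{\rm hom}(a,b,\nu)$ automatically.

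The main obstacle will be the quantitative geometric choice of $\tilde t,\tilde\ell,y_k$. Since $\ell_t$ may grow arbitrarily slowly relative to $t$, a rotation by an angle of order $|\nu-\nu_k|$ tilts the lateral faces of $R_{\tilde t,\tilde\ell}^{\nu_k}$ by an amount $\sim\tilde t|\nu-\nu_k|$, which may easily exceed $\ell_t$. Hence one cannot take $\tilde t=t$; instead $\tilde t$ must be shrunk essentially to $\min\bigl(t,\ell_t/|\nu-\nu_k|\bigr)$ so that the rotated rectangle fits into the thin strip $R_{t,\ell_t}^\nu$, while still satisfying $\tilde t\to+\infty$ and $\tilde\ell\to+\infty$ as $t\to+\infty$ (for each fixed $k$), so that the rational-case limit $\E[X_{\tilde t,\tilde\ell}^{a,b,\nu_k}(g)]\to\E[g_{\rm hom}(\cdot,a,b,\nu_k)]$ may be invoked. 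Verifying that this tripartite choice simultaneously delivers the geometric inclusion, the extra-jumpset estimate, and the rational-case limit is the delicate part of the construction.
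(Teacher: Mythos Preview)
Your proposal contains a genuine gap in the upper-bound construction for irrational $\nu$. You correctly observe that a single inscribed rectangle $y_k+R^{\nu_k}_{\tilde t,\tilde\ell}\subset R^{\nu}_{t,\ell_t}$ forces $\tilde t\lesssim \ell_t/|\nu-\nu_k|$, which can be much smaller than $t$ when $\ell_t$ grows slowly. But then your competitor $v$ equals $u^{a,b,\nu}$ on $R^{\nu}_{t,\ell_t}\setminus(y_k+R^{\nu_k}_{\tilde t,\tilde\ell})$, and its jump set therefore contains $H^{\nu}\cap\bigl(R^{\nu}_{t,\ell_t}\setminus(y_k+R^{\nu_k}_{\tilde t,\tilde\ell})\bigr)$, whose $\mathcal{H}^{d-1}$-measure is of order $t^{d-1}-\tilde t^{d-1}$. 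Your claimed error bound $C(t^{d-1}|\nu-\nu_k|+t^{d-2}\ell_t)$ accounts only for the gluing across $\partial(y_k+R^{\nu_k}_{\tilde t,\tilde\ell})$ and tacitly assumes $\tilde t\approx t$; once $\tilde t\ll t$ the uncovered flat part contributes, after dividing by $t^{d-1}$, a term close to $c$ rather than $O(|\nu-\nu_k|)$, and the argument collapses. Fixing this would require tiling $R^{\nu}_{t,\ell_t}$ with \emph{many} translated copies of the small rectangle, not a single one.

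The paper's proof avoids the rational/irrational split and the rotation altogether. It works directly with the fixed direction $\nu$: given two sequences $(t_k,\ell_k)$ and $(t_n',\ell_n')$, it tiles the large box $R^{\nu}_{t_k,\ell_k}$ by \emph{integer} translates $z_n(z)+R^{\nu}_{t_n',\ell_n'}$ of the small box in the \emph{same} direction $\nu$, writing $t_n'O_{\nu}(z,0)=z_n(z)-y_n(z)$ with $z_n(z)\in\Z^d$ and $|y_n(z)|\leq\sqrt d$. Since the translates are integers, $\Z^d$-stationarity makes all the small-box minima equidistributed with $(t_n')^{d-1}X^{a,b,\nu}_{t_n',\ell_n'}(g)$ regardless of whether $\nu$ is rational. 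The only errors come from the $O(\sqrt d)$ misalignment of the centers with $H^{\nu}$, and a careful count gives
\[
\mathbb{E}[X^{a,b,\nu}_{t_k,\ell_k}(g)]\leq \mathbb{E}[X^{a,b,\nu}_{t_n',\ell_n'}(g)]+C\Bigl(\tfrac{t_n'}{t_k}+\tfrac{1}{t_n'}\Bigr),
\]
so that $\limsup_k\leq\liminf_n$ for arbitrary sequences; the limit therefore exists, and choosing $\ell_k=t_k$ identifies it via Theorem~\ref{t.compressedprocess}. The moral is that one does not need rational approximation at all: tiling with many same-direction boxes and pushing the centers to $\Z^d$ already gives identically distributed pieces.
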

\begin{remark}
	Since the condition $\ell_t\leq t$ implies that $X_{t,\ell_t}^{a,b,\nu}(g)\geq X_{t,t}^{a,b,\nu}(g)$ (cf. Remark \ref{r.Xtl}), the convergence of the expectations to the same limit implies also convergence in $L^1(\Omega)$ and therefore also almost sure convergence after selecting a subsequence. From this one can deduce the convergence in $L^p(\Omega)$ for any $1\leq p<+\infty$ by the dominated convergence theorem.
\end{remark}
Combining the above result with the concentration estimate in Theorem \ref{t.concentration}, we obtain the almost sure convergence along all directions also for $\Z^d$-stationary models.
\begin{corollary}\label{c.asirrational}
	Let $g$ be an admissible random surface tension satisfying Assumption \ref{ass:1}(E) with a measure preserving group action $\{\tau_z\}_{z\in\Z^d}$\footnote{In \cite{DG1} it was shown that the integrability of $\pi$ already implies the ergodicity, so this assumption is rendundant. Even though the proof was given under the assumption of continuum stationarity, the proof remains unchanged with $\mathbb{Z}^d$-stationarity.} and Assumption \ref{ass:2} with a weight $\pi$ that satisfies
	\begin{equation*}
		\int_{0}^{+\infty}(s+1)^{r}\pi(s)\,\mathrm{d}s<+\infty\quad\text{for some}\ r>2(d-1).
	\end{equation*}
	Fix $\nu\in\mathbb{S}^{d-1}$ and let $\bar{\ell}_n\to+\infty$ be an arbitrary diverging sequence. Then a.s. for all $(a,b)\in\mathcal{M}\times\M$ it holds that
	\begin{equation}\label{e:ghom-irrational}
		\lim_{n\to +\infty}X_{t_n,\ell_n}^{a,b,\nu}(g)(\omega)=g_{\rm hom}(a,b,\nu),
	\end{equation}
	whenever $\bar{\ell}_n\leq\ell_n\leq t_n$ for every $n\in\N$.
\end{corollary}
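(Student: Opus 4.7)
The plan is to combine the convergence of expectations in Proposition~\ref{p.expectation}, the concentration in Theorem~\ref{t.concentration}, and the monotonicity of $X_{t,\ell}$ from Remark~\ref{r.Xtl}. The lower bound comes for free: by monotonicity in $\ell$ one has $X_{t_n,\ell_n}(\omega)\geq X_{t_n,t_n}(\omega)$, and $X_{t_n,t_n}\to g_{\rm hom}$ almost surely for every $\nu\in\Sph^{d-1}$ by Theorem~\ref{t.compressedprocess}, which applies in the $\Z^d$-stationary case via the deterministic continuity-in-$\nu$ argument of Remark~\ref{r.mainresult}(i). Since $t_n\geq\bar\ell_n\to+\infty$, this gives $\liminf_{n}X_{t_n,\ell_n}\geq g_{\rm hom}$ a.s., simultaneously for all $(a,b)\in\M\times\M$.

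For the upper bound I would use monotonicity in $\ell$ once more to write $X_{t_n,\ell_n}\leq X_{t_n,\bar\ell_n}$, so it is enough to prove that almost surely $\limsup_{n\to+\infty}\sup_{t\geq\bar\ell_n}X_{t,\bar\ell_n}(\omega)\leq g_{\rm hom}$. Replacing $\bar\ell_n$ by $\inf_{m\geq n}\bar\ell_m$, which only sharpens the constraint and still diverges, one may assume WLOG that $\bar\ell_n$ is non-decreasing; the events $E_n(\eta)=\{\sup_{t\geq\bar\ell_n}X_{t,\bar\ell_n}>g_{\rm hom}+\eta\}$ are then monotone decreasing in $n$, and almost-sure convergence reduces to $\mathbb{P}(E_n(\eta))\to 0$ for every $\eta>0$. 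A standard contradiction argument upgrades Proposition~\ref{p.expectation} to the uniform statement $\sup_{t\geq\ell}\mathbb{E}[X_{t,\ell}]\to g_{\rm hom}$ as $\ell\to+\infty$, giving $\sup_{t\geq\bar\ell_n}\mathbb{E}[X_{t,\bar\ell_n}]\leq g_{\rm hom}+\eta/2$ for $n$ large. It remains to prove $\sup_{t\geq\bar\ell_n}(X_{t,\bar\ell_n}-\mathbb{E}[X_{t,\bar\ell_n}])\to 0$ in probability; choosing $p>1$ with $2p(d-1)\leq r$ (possible since $r>2(d-1)$), Theorem~\ref{t.concentration} and Chebyshev yield $\mathbb{P}(|X_{t,\bar\ell_n}-\mathbb{E}[X_{t,\bar\ell_n}]|>\eta/2)\leq C_{\eta,p}(t^{1-d}\bar\ell_n)^p$, and the almost-monotonicity of $t\mapsto X_{t,\ell}$ reduces the continuous supremum to a supremum over integer $t$ at the cost of an additive error $O(1/\bar\ell_n)$.

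The hard part is to make the union bound over the integer grid $\{k\in\N\colon k\geq\bar\ell_n\}$ simultaneously summable and vanishing. In dimensions $d\geq 3$ the bound becomes $\sum_{k\geq\bar\ell_n}k^{p(1-d)}\bar\ell_n^p\sim\bar\ell_n^{1+p(2-d)}\to 0$ for $p$ chosen large enough, which directly gives the required convergence in probability. In dimension $d=2$ the analogous sum is of order $\bar\ell_n$, so the plain union bound is useless; the remedy I expect is a dyadic scale decomposition $[\bar\ell_n,+\infty)=\bigcup_j[2^j\bar\ell_n,2^{j+1}\bar\ell_n]$ combined with a maximal-type estimate exploiting the near-subadditive structure of the process inherited from the tiling argument behind Proposition~\ref{p.expectation}, so that a single representative such as $X_{2^j\bar\ell_n,\bar\ell_n}$ controls the whole block up to a small error and the geometric decay across blocks restores summability.
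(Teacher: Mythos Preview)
Your lower bound and your reduction of the upper bound to controlling $X_{t_n,\bar\ell_n}$ are fine, and for $d\geq 3$ your union bound over integer $t$ does close. The gap is in $d=2$: the sum $\sum_{k\geq\bar\ell_n}(k^{-1}\bar\ell_n)^p\sim\bar\ell_n$ genuinely blows up, and your proposed dyadic/maximal patch is not substantiated. The tiling behind Proposition~\ref{p.expectation} controls \emph{expectations}, not pathwise oscillations, so it does not furnish a maximal inequality in the $t$-direction at fixed $\ell$; and the deterministic almost-monotonicity \eqref{est:monotonicity-t} only gives $X_{t',\ell}\leq X_{t,\ell}+c(t'-t)/t'$, which across a dyadic block $[2^jL,2^{j+1}L]$ produces an error of order $c/2$, not $o(1)$. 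So ``a single representative controls the whole block up to a small error'' is not available, and the summability over blocks does not follow.

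The paper avoids the supremum over $t$ altogether. Since the null set is allowed to depend on $\bar\ell$, one first replaces $\bar\ell_n$ by a slower integer sequence satisfying $\bar\ell_n\leq n^\alpha$ for some small $\alpha>0$ (this only enlarges the class of admissible $(t_n,\ell_n)$, so it is WLOG). One then proves almost sure convergence of the \emph{single} sequence $X_{n,\bar\ell_n}$: choosing $p=r/(2(d-1))>1$ and $\alpha<d-1-1/p$, Theorem~\ref{t.concentration} with Chebyshev gives
\[
\sum_{n\in\N}\mathbb{P}\big(|X_{n,\bar\ell_n}-\mathbb{E}[X_{n,\bar\ell_n}]|>\delta\big)\leq C_{\delta,p}\sum_{n\in\N}n^{p(\alpha+1-d)}<\infty,
\]
so Borel--Cantelli plus Proposition~\ref{p.expectation} yield $X_{n,\bar\ell_n}\to g_{\rm hom}$ a.s., in every dimension $d\geq 2$. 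Together with $X_{n,n}\to g_{\rm hom}$, the monotonicity $X_{n,n}\leq X_{n,\ell}\leq X_{n,\bar\ell_n}$ handles all $\ell\in[\bar\ell_n,n]$ for integer $n$, and \eqref{est:monotonicity-t}--\eqref{est:monotonicity-ell} extend to real $t_n$. The point you are missing is that by tying the $t$-scale to the index $n$ and making $\bar\ell_n$ grow like a small power of $n$, the factor $\ell_t$ in Theorem~\ref{t.concentration} is absorbed into a summable power of $n$, so no union bound over $t$ is needed.
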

\begin{remark}[Choice of $\bar{\ell}_n$]
The choice of the sequence $\bar{\ell}_n$ in Corollary~\ref{c.asirrational} is only for technical reasons to avoid that the exceptional set where~\eqref{e:ghom-irrational} might fail may depend on the diverging sequences $\ell_n,t_n$. Instead, choosing an arbitrarily slowly diverging sequence $\bar{\ell}_n$ allows to exclude an exceptional set depending only on $\nu$ and the fixed sequence $\bar{\ell}_n$. As already observed in Remark~\ref{r.mainresult}~\ref{rem:rational}, this procedure is not necessary for rational directions.
\end{remark}
\subsection{Non-existence of plane-like minimizing sequences in the ergodic setting}\label{s.nonplanelike}
In light of Theorem \ref{t.compressedprocess} one might try to prove that the height of the flat hyperrectangles can be taken to be bounded, still obtaining the same limit. Below we show that this is in general not possible. To be more precise, let us introduce a notion of plane-like minimizing sequences. In what follows we focus on the ergodic setting to simplify the formulas.
\begin{definition}[Plane-like minimizing sequence]\label{def:planelike}
	Let $g$ be an admissible random surface tension satisfying Assumption \ref{ass:1}(E) and let $g_{\rm hom}$ be as in Theorem~\ref{t.compressedprocess} and $\nu\in\Sd$. Given a realization $\omega\in\Omega$ we say that a sequence $(u_t^\omega)_t$ is a minimizing sequence for $g_{\rm hom}(a,b,\nu)$, if $u_t^\omega\in\Adm(\uzn,tQ^\nu)$ for every $t>0$ and 
	\begin{equation}\label{cond:min-sequence}
		g_{\rm hom}(a,b,\nu)=\lim_{t\to+\infty} t^{1-d}E_g(\omega)(u_t^\omega,tQ^\nu).
	\end{equation}
	We say that a minimizing sequence for $g_{\rm hom}(a,b,\nu)$ is  plane-like, if there exist $\ell\in\N$ and $t_\ell>0$ such that $u_t^\omega\in\Adm(\uzn,R_{t,\ell}^\nu)$ for all $t>t_\ell$.
\end{definition}
We will provide an example of a stationary, ergodic integrand $g$ in dimension two such that the probability of finding a plane-like minimizing sequence for the direction $\nu=(0,1)$ is zero.
\begin{example}\label{ex.ex}
Let $d=2$ and $\mathcal{M}=\{0,1\}$. There exists an admissible random surface tension $g$ satisfying Assumption \ref{ass:1}(E) such that with probability $1$ there exists no plane-like minimizing sequence for $g_{\rm hom}(0,1,e_2)$.
\end{example}
\begin{remark}
The restriction to $\mathcal{M}=\{0,1\}$ is only for convenience. In fact, we will construct an integrand depending only on $x$ and $\omega$, so that the same construction works for any finite set $\mathcal{M}$. Moreover, it will be clear from the construction that it can be extended to any dimension upon heavier notation and replacing $e_2$ by $e_d$.
\end{remark}
Below we prove the claim made in Example \ref{ex.ex} and construct an admissible ergodic random surface tension $g$ for which $g_{\rm hom}(0,1,e_2)$ has no plane-like minimizing sequences in the sense of Definition~\ref{def:planelike}. We let $u^{0,1}$ be as in~\eqref{eq:purejump} with $(a,b)=(0,1)$ and $\nu=e_2$.

\textbf{Step 1.} In this step we construct a suitable random surface integrand $g$. To this end, we let $\{X_i\}_{i\in\mathbb{Z}}$ be a sequence of independent and $[1,2]$-uniformly distributed random variables on a suitable probability space $(\Omega,\mathcal{F},\mathbb{P})$ equipped with a measure-preserving, ergodic map $\tau:\Omega\rightarrow\Omega$ satisfying the following properties:
\begin{enumerate}[label=\roman*)]
	\item\label{inverse} $\tau$ is bijective and the inverse map $\tau^{-1}:\Omega\to\Omega$ is again $\F$-measurable;
	\item\label{iteration} $X_i(\w)=X_0(\tau^i\omega)$ for every $i\in\Z$, where $\tau^i$ denotes the $i$-times iterated composition of the map $\tau$ for $i\geq 0$ (with the convention $\tau^0\defas{\rm id}$), respectively the $-i$-times iterated composition of $\tau^{-1}$ for $i<0$.
\end{enumerate} 
This setting can be realized on the product space $\Omega=[1,2]^{\mathbb{Z}}$ with the shift operator (see~\cite[Section 7.3]{JKO_Hom}).
We now define a random surface integrand $g:\Omega\times\R^2\times\Sph^1\to[0,+\infty)$ by setting
\begin{equation}\label{constr:random-integrand}
	g(\omega,x,\nu)\defas X_{\lceil x_2\rceil}(\omega)\quad\text{for every}\ x=(x_1,x_2)\in\R^2,\ \omega\in\Omega,\ \nu\in\Sph^{1},
\end{equation}
where $\lceil x_2\rceil$ denotes the upper integer part of $x_2$. In this way, $g$ is measurable, and since each $X_i$ takes values in $[1,2]$ it satisfies $1\leq g(\omega,x,\nu)\leq 2$, \ie $g(\omega)\in\A_2$. Moreover, both $\tau$ and $\tau^{-1}$ are measure-preserving and ergodic. Thus, the family of maps $\tau_z\defas\tau^{z_2}$, $z\in\Z^2$ defines a measure-preserving ergodic group action. Thanks to~\ref{iteration} it is immediate to see that $g$ is stationary with respect to $\{\tau_z\}_{z\in\Z^2}$ as above\footnote{This integrand is only $\Z^2$-stationary. However, following the method in \cite[Section 7.3]{JKO_Hom} one can turn this example in an $\R^d$-stationary, ergodic medium that has pointwise the same stripe-like structure.}, and we set
\begin{equation}\label{def:energy-example}
	E_g(\omega)\defas\int_{S_u\cap D}g(\omega,x,\nu)\dH^1=\int_{S_u\cap D}X_{\lceil x_2\rceil}(\omega)\dH^1\quad\text{for all}\ u\in BV(D;\{0,1\}).
\end{equation}
Eventually, for every $\ell\in\N$ and $t>0$ we let $X_{t,\ell}^{e_2}$ be as in~\eqref{def:Xtl} with $\nu=e_2$, $a=0$, $b=1$, and $E_g$ as in~\eqref{def:energy-example}. 

\noindent Let us introduce the random variables $Y_\ell:\Omega\to[1,2]$ given by $Y_\ell(\omega)\defas\min_{i\in [-\ell+1,\ell]} X_i(\omega)$, which clearly satisfy $Y_{\ell+1}(\omega)\leq Y_\ell(\omega)$ for every $\ell\in\N$. Morever, for every $\ell\in\N$ there exists $\Omega_\ell\in\F$ with $\P(\Omega_\ell)>0$ such that
\begin{equation}\label{cond:minXi}
	Y_\ell(\omega)> X_{\ell+1}=Y_{\ell+1}(\omega)\quad\text{for every}\ \omega\in\Omega_\ell.
\end{equation} 
Indeed, since all $X_i$ are independent and uniformly distributed on the interval $[1,2]$, for any $s\in(1,2)$ we have
$
\P\big(X_{-\ell}>s,\ldots,X_{\ell}>s,X_{\ell+1}\leq s\big)=(2-s)^{2\ell+1}(s-1)>0,
$
which implies~\eqref{cond:minXi}.

\medskip

\textbf{Step 2.} In this step we show that almost surely we have
\begin{enumerate}[label=\arabic*)]
	\item\label{prop:Yell}$\lim_{t\to +\infty}X_{t,2\ell}^{e_2}(g)(\omega)=Y_\ell(\omega)$ for every $\ell\in\N$;
	\item\label{prop:lim-ell}$\lim_{\ell\to +\infty}Y_\ell(\omega)=1$.
\end{enumerate}
%
%
We start proving~\ref{prop:Yell}: Since $-\ell+1\leq\lceil x_2\rceil\leq\ell$ for every $x\in R_{t,2\ell}^{e_2}=(-\frac{t}{2},\frac{t}{2})\times(-\ell,\ell)$, we clearly have that
$
E_g(\omega)(u, R_{t,2\ell}^{e_2})\geq Y_\ell(\omega)\Hd(S_u\cap R_{t,2\ell}^{e_2})
$
for every $u\in BV(R_{t,2\ell}^{e_2};\{0,1\})$,
which in particular yields
\begin{equation}\label{est:X2ell}
	X_{t,2\ell}^{e_2}(g)(\omega)\geq Y_\ell(\omega)\frac{1}{t}\min\big\{\mathcal{H}^1(S_u\cap R_{t,2\ell}^{e_2})\colon u\in\Adm (u^{0,1}, R_{t,2\ell}^{e_2})\big\}=Y_{\ell}(\w)\frac{1}{t}\mathcal{H}^1 (H^{\nu}\cap R_{t,2\ell}^{e_2})=Y_\ell(\omega).
\end{equation}
To estimate $X_{t,2\ell}^{e_2}(g)(\omega)$ from above, let $i_\ell\in(-\ell,\ell]$ (depending also on $\omega$) be such that $X_{i_\ell}(\omega)\leq X_i(\omega)$ for every $i\in(-\ell,\ell]$. Without loss of generality we assume that $i_\ell>0$. Then the function $u_\ell$ defined as the characteristic function of the set $R_{t,2\ell}^{e_2}\setminus R_{t-1,2i_\ell-1}^{e_2}\setminus\{x_2<0\}$ (see Figure~\ref{fig:uell}) is admissible for $X_{t,2\ell}^{e_2}(g)(\omega)$ and satisfies
\begin{equation*}
	\begin{split}
		E_g(\omega)(u_\ell,R_{t,2\ell}^{e_2}) &\leq\int_{\{x_2=i_\ell-\frac{1}{2},\ |x_1|<\frac{t-1}{2}\}}X_{\lceil x_2\rceil}(\omega)\dH^1\\
		&\hspace*{1em}+2\mathcal{H}^1\Big(\big\{|x_1|=\tfrac{t-1}{2},\ x_2\in\big(0,i_\ell-\tfrac{1}{2}\big)\big\}\cup\big\{|x_2|=0,\ |x_1|\in\big(\tfrac{t-1}{2},\tfrac{t}{2}\big)\big\}\Big)\\
		&= (t-1)X_{i_\ell}(\omega)+4i_\ell\leq t Y_\ell(\omega)+4\ell.
	\end{split}
\end{equation*}
\begin{figure}[t]
	\centering
	\def\svgwidth{.45\textwidth}
	\input{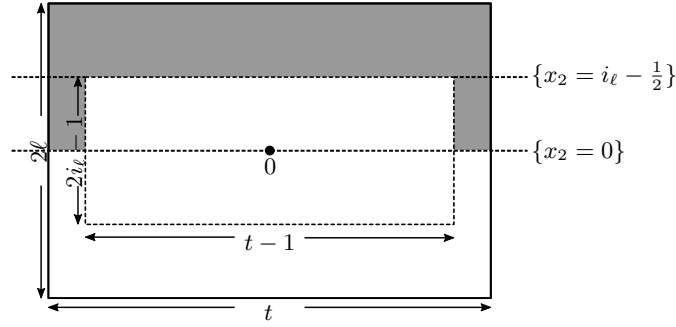}
	\caption{The rectangles $R_{t,2\ell}^{e_2}$, $R_{t-1,2i_\ell-1}^{e_2}$ and in gray the set $R_{t,2\ell}^{e_2}\setminus R_{t-1,2i_\ell-1}^{e_2}\setminus \{x_2<0\}$ where $u_\ell=1$.}
	\label{fig:uell}
\end{figure}
Dividing the above inequality by $t$ and using~\eqref{est:X2ell} we thus obtain
\begin{equation*}
	X_{t,2\ell}^{e_2}(g)(\omega)\leq\frac{1}{t}E_g(u_\ell,R_{t,2\ell}^{e_2})\leq Y_\ell(\omega)+\frac{4\ell}{t}\leq X_{t,2\ell}^{e_2}(g)(\omega)+\frac{4\ell}{t}.
\end{equation*}
Passing in the above inequality to the limsup on the left-hand side and to the liminf on the right-hand side yields that almost surely there exists
\begin{equation}\label{example:gell}
	g_{\ell}(\omega,e_2)\defas\lim_{t\to+\infty}X_{t,2\ell}^{e_2}(g)(\omega)=Y_{\ell}(\omega).
\end{equation}
%
We now come to prove~\ref{prop:lim-ell}; by construction we have $\P(Y_\ell>s)=(2-s)^{2\ell}$ for every $s\in(1,2)$, hence $\sum_{\ell}\P(Y_\ell>s)<+\infty$, since $2-s<1$. As a consequence, the monotone continuity of $\P$ together with the Borel-Cantelli Lemma imply that
\begin{equation*}
	\P\big(\limsup_{\ell\to+\infty} Y_\ell>1\big)=\lim_{s\searrow 1}\P\big(\limsup_{\ell\to+\infty} Y_\ell>s\big)=0.
\end{equation*}
Thus,~\ref{prop:lim-ell} follows immediately from the existence of $\lim_\ell Y_\ell(\omega)=\inf_\ell Y_\ell(\omega)\geq 1$.

\medskip

\textbf{Step 3.} Conclusion and final remarks.\\
\noindent As a first consequence of~\ref{prop:Yell} and~\ref{prop:lim-ell} we deduce that almost surely there exists
\begin{equation}\label{example:ghom}
	g_{\rm hom}(e_2)\defas\lim_{t\to+\infty}X_{t,t}^{e_2}(g)(\omega)=1\leq g_{\ell}(\omega,e_2),
\end{equation}
where $g_\ell$ is as in~\eqref{example:gell}. Indeed, arguing as in~\eqref{est:X2ell} we obtain $1\leq\liminf_t X_{t,t}^{e_2}(g)(\omega)$, while~\eqref{est:monotonicity-ell} together with~\ref{prop:Yell} implies that $\limsup_t X_{t,t}^{e_2}(g)(\omega)\leq Y_\ell(\omega)$ for every $\ell\in\N$. Hence,~\eqref{example:ghom} follows by letting $\ell\to+\infty$ and using~\ref{prop:lim-ell}.

As a consequence, with probability $1$ minimizing sequences are not plane-like. In fact, assume that for $\w\in\Omega$ there exists a plane-like minimizing sequence $(u_t^\omega)_t$ in the sense of Definition~\ref{def:planelike} with parameter $2\ell=2\ell(\w)\in\N$. Then by definition
\begin{equation*}
	1=g_{\rm hom}(e_2)=\lim_{t\to +\infty}\frac{1}{t}E_g(\w)(u_t^{\w},tQ^{\nu})=\lim_{t\to +\infty}\frac{1}{t}E_g(\w)(u_t^{\w},R_{t,2\ell}^{\nu})\geq \lim_{t\to +\infty}X_{t,2\ell}^{e_2}(g)(\w)=Y_{\ell}(\w)\geq 1,
\end{equation*}
where we used \eqref{example:gell}. Hence $Y_{\ell}(\w)=1$, but we clearly have $\mathbb{P}(\exists \ell\in\N:\,Y_{\ell}=1)=0$.

We conclude this example by observing that for every $\ell\in\N$ the function $g_\ell(\cdot,e_2)$ is neither deterministic nor invariant under the group action $\{\tau_z\}_{z\in\Z^2}$. In fact, for any interval $(s_1,s_2)\subset(1,2)$ we have by definition that $\P(Y_\ell\in(s_1,s_2))=(2-s_1)^{2\ell}-(2-s_2)^{2\ell}>0$, which implies that $g_\ell(\cdot,e_2)$ still depends on the realization $\omega$. This already implies that $g_\ell(\cdot, e_2)$ cannot be invariant under the group action $\{\tau_z\}_{z\in\Z^2}$. However, this can also be see directly by observing that by construction, for any $z=(z_1,z_2)\in\Z^2$ it holds that $g_\ell(\tau_z\omega,e_2)=g_{\ell+z_2}(\w,e_2)$. Thus, assuming that $z_2>0$, from~\eqref{example:gell} and~\eqref{cond:minXi} we deduce that
\begin{align*}
	g_\ell(\tau_z\omega,e_2)=g_{\ell+z_2}(\omega,e_2)\leq g_{\ell+1}(\omega,e_2)<g_\ell(\omega,e_2)\quad\text{for every}\ \omega\in\Omega_\ell.
\end{align*}
\qed
\section{Proofs}\label{s.proofs}
%
\subsection{Almost plane-like formulas in the stationary setting: Proof of Theorem~\ref{t.compressedprocess}}
As a preliminary step towards the proof of Theorem~\ref{t.compressedprocess}, for $\ell\in\N$ fixed we analyze the asymptotic behavior of $X_{t,\ell}^{a,b,\nu}(g)(\w)$ as $t\to+\infty$. This will be done by relating $X_{t,\ell}^{a,b,\nu}(g)$ in a suitable way to a subadditive stochastic process in dimension $(d-1)$. We recall here the notion of such a process for the readers' convenience.

\medskip
For every $p=(p_1,\dots,p_{d-1}),\,q=(q_1,\dots,q_{d-1})\in\R^{d-1}$ with $p_i<q_i$ for all $i\in\{1,\ldots,d-1\}$ we consider the $(d-1)$-dimensional half-opens intervals 
\begin{equation*}
[p,q):=\{x\in\R^{d-1}\colon p_i\le x_i<q_i\,\,\text{for}\,\,i=1,\ldots,d-1\}
\end{equation*}
and we set
\begin{equation*}
\I:=\{[p,q)\colon p,q\in\R^{d-1}\,,\, p_i<q_i\,\,\text{for}\,\,i=1,\ldots,d-1\}\,.
\end{equation*}
\begin{definition}[Subadditive process]\label{sub_proc}
	A subadditive process with respect to a measure-preserving additive group action $\{\tau_z\}_{z\in\R^{d-1}}$ is a function $\mu\colon\I\times\Omega\to\R$ satisfying the following properties:
	\begin{enumerate}[label=(\arabic*)]
		\item\label{subad:meas} (measurability) for every $I\in\I$ the function $\omega\mapsto\mu(I,\omega)$ is $\F$-measurable;
		\item\label{subad:cov} (stationarity) for every $\omega\in\Omega$, $I\in\I$, and $z\in\R^{d-1}$ we have $\mu(I+z,\omega)=\mu(I,\tau_z(\omega))$;
		\item\label{subad:sub} (subadditivity) for every $I\in\I$ and for every finite partition $(I^i)_{i=1}^k$ of $I$, we have
		\begin{equation*}
		\mu(I,\omega)\leq\sum_{i=1}^k\mu(I_i,\omega)\quad\text{for every}\,\,\omega\in\Omega\,;
		\end{equation*}
		\item\label{subad:bound} (boundedness) there exists $M>0$ such that $0\leq\mu(I,\omega)\le M\mathcal{L}^{d-1}(I)$ for every $\omega\in\Omega$ and $I\in\I$.
	\end{enumerate}
\end{definition} 
In order to, relate $X_{t,\ell}^{a,b,\nu}(g)$ to a subadditive process, we associate to each hyperrectangle $\Rntl$ a set $I\in\mathcal{I}$ (and vice versa) as follows: For fixed $\nu\in\Sph^{d-1}$ we let $O_\nu$ be the orthogonal matrix induced by~\eqref{eq:matrix}. 
For every $I=[p_1,q_1)\times\cdots\times[p_{d-1},q_{d-1})\in\mathcal{I}$ we denote by $s_{\rm max}(I)\defas\max_{i}|q_i-p_i|$ its maximal side length and define the open set $Q^{\ell}(I)\subset\R^d$ as
\begin{equation}\label{def:ndimintervals}
Q^{\ell,\nu}(I):=O_\nu\Big({\rm int}\, I\times \min\{\ell, s_{\rm max}(I)\}(-1/2,1/2)\Big),
\end{equation}
where ${\rm int}$ denotes the $(d-1)$-dimensional interior. For $\ell=+\infty$ we clearly have $Q^{\infty,\nu}(I)=O_\nu\big({\rm int}\, I\times s_{\rm max}(I)(-1/2,1/2)\big)$.
Then we define a function $\mu_{\ell}^{a,b,\nu}:\mathcal{I}\times\Omega\to\R$ by setting
\begin{equation}\label{def:subadprocess}
\mu_{\ell}^{a,b,\nu}(I,\omega)\defas\inf\{\Egw (u,Q^{\ell,\nu}(I))\colon u\in\Adm(u^{a,b,\nu},Q^{\ell}(I))\}.
\end{equation}
In this way, we have
\begin{equation}\label{relation:mu-ell-X-t-ell}
\frac{\mu_{\ell}^{a,b,\nu}\big([-\tfrac{t}{2},\tfrac{t}{2})^{d-1},\omega\big)}{t^{d-1}}=X_{t,\ell}^{a,b,\nu}(g)(\omega)\;\text{ if }\;\ell\leq t,\qquad \frac{\mu_{\ell}^{a,b,\nu}\big([-\tfrac{t}{2},\tfrac{t}{2})^{d-1},\omega\big)}{t^{d-1}}=X_{t,t}^{a,b,\nu}(g)(\omega)\;\text{ if }\;\ell\geq t.
\end{equation}
\begin{lemma}\label{l.ellfixed}
Let $g:\Omega\times\R^d\times\R^m\times\Sph^{d-1}\to[0,+\infty)$ be an admissible random surface tension satisfying Assumption \ref{ass:1}. For every $\ell\in\N\cup\{+\infty\}$, $(a,b)\in\M\times\M$, and $\nu\in\Sph^{d-1}$ let $\mu_{\ell}^{a,b,\nu}(I,\omega)$ be as in~\eqref{def:subadprocess}. Then there exists a measure-preserving group action $\{\tau_z^\nu\}_{z\in\R^{d-1}}$ such that $\mu_{\ell}^{a,b,\nu}$ is a subadditive process with respect to $\{\tau_{z}^\nu\}_{z\in\R^{d-1}}$ satisfying
\begin{equation}\label{eq:l_inftybound}
\mu_{\ell}^{a,b,\nu}(I,\omega)\leq \Egw (\uzn,Q^\ell(I))\leq c  \mathcal{L}^{d-1}(I)
\end{equation}
for every $\omega\in\Omega$.
Moreover, there exist $\Omega_{\ell}^{a,b,\nu}\subset\Omega$ with $\P(\Omega_\ell^{a,b,\nu})=1$ and a function $g_\ell(\cdot,a,b,\nu):\Omega\to\R$ such that for every $\omega\in\Omega_{\ell}^{a,b,\nu}$ we have
\begin{align}\label{ex:limit:rational}
g_{\ell}(\w,a,b,\nu)=
\begin{cases}
\displaystyle\lim_{t\to+\infty}X_{t,\ell}^{a,b,\nu}(g)(\w)&\text{if}\ \ell\in\N,\cr
\displaystyle\lim_{t\to+\infty}X_{t,t}^{a,b,\nu}(g)(\omega) &\text{if}\ \ell=+\infty.
\end{cases}
\end{align}
Eventually, $\Omega_\ell^{a,b,\nu}$ and $g_\ell(\cdot,a,b,\nu)$ are invariant under the group action $\{\tau_z^\nu\}_{z\in\R^{d-1}}$, \ie $\tau_z^\nu(\Omega_{\ell}^{a,b,\nu})=\Omega_\ell^{a,b,\nu}$ for every $z\in\R^{d-1}$ and
\begin{equation}\label{eq:partialinvariance}
g_{\ell}(\tau_z^\nu\w,a,b,\nu)=g_{\ell}(\omega,a,b,\nu)\quad\text{for every}\ \omega\in\Omega_{\ell}^{a,b,\nu}.
\end{equation} 
\end{lemma}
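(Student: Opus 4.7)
The plan is to view $\mu_{\ell}^{a,b,\nu}$ as a subadditive stochastic process on the hyperplane $\Hn$ and then apply the multiparameter pointwise ergodic theorem of Akcoglu--Krengel. First I would define the $\R^{d-1}$-action by
\[
\tau_z^\nu\w := \tau_{\sum_{j=1}^{d-1} z_j\nu_j}\w,\qquad z=(z_1,\ldots,z_{d-1})\in\R^{d-1};
\]
since each $\nu_j=O_\nu e_j$ lies in $\Hn$ and $\{\tau_y\}_{y\in\R^d}$ is measure-preserving by Assumption~\ref{ass:1}, so is $\{\tau_z^\nu\}_{z\in\R^{d-1}}$.

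Next I would verify the four properties of Definition~\ref{sub_proc}. Measurability in $\w$ follows from the measurable-selection argument already quoted in Remark~\ref{r.Xtl}. The bound~\eqref{eq:l_inftybound} (hence property~\ref{subad:bound} with $M=c$) comes by testing the infimum with $u=\uzn$, whose jumpset inside $Q^{\ell,\nu}(I)$ is exactly $O_\nu(\mathrm{int}\,I\times\{0\})$, of $\Hd$-measure $\mathcal{L}^{d-1}(I)$. For stationarity, given a competitor $u$ for $\mu_{\ell}^{a,b,\nu}(I+z,\w)$, the function $v(x):=u(x+\sum_j z_j\nu_j)$ is a competitor for $\mu_{\ell}^{a,b,\nu}(I,\tau_z^\nu\w)$: the translation by a vector orthogonal to $\nu$ leaves $\uzn$ invariant, and the $\R^d$-stationarity of $g$ makes the energies on $Q^{\ell,\nu}(I+z)$ under $\w$ and on $Q^{\ell,\nu}(I)$ under $\tau_z^\nu\w$ coincide; passing to the infimum yields property~\ref{subad:cov}.

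The main step is subadditivity. Given a finite partition $(I^i)_{i=1}^k$ of $I$ and, for each $i$, an almost-minimizer $u_i\in\Adm(\uzn,Q^{\ell,\nu}(I^i))$, I construct a competitor $u$ on $Q^{\ell,\nu}(I)$ by setting $u:=u_i$ on $Q^{\ell,\nu}(I^i)$ and $u:=\uzn$ on the complementary vertical caps. Writing $h:=\min\{\ell,s_{\max}(I)\}$ and $h_i:=\min\{\ell,s_{\max}(I^i)\}\leq h$, the apparent mismatch of heights is harmless because the jumpset $\{\langle x,\nu\rangle=0\}$ of $\uzn$ already lies inside every $Q^{\ell,\nu}(I^i)$ (as $0\in h_i(-1/2,1/2)$), so extending by $\uzn$ creates no new jumps in the caps. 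Since $u_i=\uzn$ in a neighborhood of $\partial Q^{\ell,\nu}(I^i)$, no jump appears at the glued interfaces, and the same identity on the lateral boundary of $Q^{\ell,\nu}(I)$ gives $u\in\Adm(\uzn,Q^{\ell,\nu}(I))$. Hence $\Egw(u,Q^{\ell,\nu}(I))=\sum_i\Egw(u_i,Q^{\ell,\nu}(I^i))$, and letting the approximation error tend to zero yields property~\ref{subad:sub}.

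With the subadditive-process structure in place, Akcoglu--Krengel's theorem applied to the increasing cubes $[-t/2,t/2)^{d-1}$ produces an $\F$-measurable, $\{\tau_z^\nu\}$-invariant function $g_{\ell}(\cdot,a,b,\nu)$ together with a full-measure set $\Omega_{\ell}^{a,b,\nu}$ on which the normalized limit exists; the stationarity just proved lets $\Omega_{\ell}^{a,b,\nu}$ be chosen $\{\tau_z^\nu\}$-invariant. The identities in~\eqref{relation:mu-ell-X-t-ell} then translate this limit into~\eqref{ex:limit:rational} in both cases $\ell\in\N$ (eventually $t\geq\ell$) and $\ell=+\infty$. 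The only delicate point I anticipate is the subadditivity verification above; once one notices that the hyperplane jumpset of $\uzn$ fits inside the short hyperrectangle $Q^{\ell,\nu}(I^i)$, the remaining properties and the passage to the ergodic theorem are routine.
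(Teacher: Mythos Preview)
Your proposal is correct and follows essentially the same route as the paper: verify the four axioms of a subadditive process (measurability via \cite[Proposition A.1]{CDMSZ19}, the bound by testing $\uzn$, stationarity via the hyperplane translation, and subadditivity by gluing almost-minimizers extended by $\uzn$), then invoke Akcoglu--Krengel together with~\eqref{relation:mu-ell-X-t-ell}. The only cosmetic differences are a sign convention in the definition of $\tau_z^\nu$ (the paper sets $\tau_z^\nu:=\tau_{-O_\nu(z,0)}$), and that the paper makes the $\{\tau_z^\nu\}$-invariance of $\Omega_\ell^{a,b,\nu}$ explicit via the sandwich estimate~\eqref{est:extension} rather than appealing to the general output of the ergodic theorem.
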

\begin{remark}\label{r.ex:limit:rational}
For $\ell=+\infty$ the corresponding function $g_\infty(\cdot,a,b,\nu)$ given by~\eqref{ex:limit:rational} is invariant under the whole group action $\{\tau_z\}_{z\in\R^d}$ associated to $g$ (the invariance can be proven by a deterministic argument as in \cite[Theorem 6.2]{CDMSZ19} similar to~\eqref{est:extension}). This is, in general, not true for the functions $g_\ell(\cdot,a,b,\nu)$ with $\ell\in\N$. In fact, Example~\ref{ex.ex} provides an admissible random surface tension $g$ where~\eqref{eq:partialinvariance} fails if $\tau_z^\nu$ is replaced by $\tau_z$. As a consequence, $g_\infty(\cdot,a,b,\nu)$ is deterministic if $g$ is ergodic, while this is in general not true for $g_\ell$. 

\smallskip
If $g$ is only $\Z^d$-stationary, and $\nu\in\Sd\cap\mathbb Q^{d\times d}$, then $\mu_{\ell}^{a,b,\nu}$ defines a subadditive process with respect to a discrete measure-preserving group action. In particular, the limits in~\eqref{ex:limit:rational} still exist for rational directions. The existence of the second limit can still be extended to irrational directions via continuity (cf. Remark~\ref{r.mainresult}~\ref{rem:rational}).

\smallskip 
The sets of full probability for which~\eqref{ex:limit:rational} holds depend on $\ell,a,b,\nu$; for fixed $\nu\in\Sph^{d-1}$ we can define a set $\Omega^\nu$ of full probability by taking the countable intersection of $\Omega_\ell^{a,b,\nu}$ over $(a,b)\in\M\times\M$, $\ell\in\N\cup\{+\infty\}$. Then for every $\omega\in\Omega^\nu$ the limits in~\eqref{ex:limit:rational} exist for all $(a,b)\in\M\times\M$ and every $\ell\in\N\cup\{+\infty\}$. In particular, the monotonicity property in Remark implies that for every $\omega\in\Omega^\nu$ and every $(a,b)\in\M\times\M$ we have
\begin{equation}\label{monotonicity-gell}
g_\infty(\omega,a,b,\nu)\leq g_{\ell+1}(\omega,a,b,\nu)\leq g_\ell(\omega,a,b,\nu)\quad\text{for every}\ \ell\in\N.
\end{equation}
Example~\ref{ex.ex} also shows that for every $\ell\in\N$ the above inequality can be strict on a set of positive probability (depending again on $\ell,a,b,\nu$).
\end{remark}
\begin{proof}[Proof of Lemma \ref{l.ellfixed}]
Throughout this proof we fix $(a,b)\in\M\times\M$ and $\nu\in \mathbb{S}^{d-1}$. In order to not to overburden notation, for every $I\in\I$ and $\ell\in\N\cup\{+\infty\}$ we write $Q^\ell(I)=Q^{\ell,\nu}(I)$ for the $d$-dimensional interval introduced in~\eqref{def:ndimintervals} and for every $\omega\in\Omega$ we set $\mu_\ell(I,\omega)\defas\mu_\ell^{a,b,\nu}(I,\omega)$ with $\mu_\ell^{a,b,\nu}$ as in~\eqref{def:subadprocess}.
%

\medskip
\textbf{Step 1.} Stationarity and subadditivity of $\mu_{\ell}$
\\
The fact that $\mu_{\ell}(I,\cdot)$ is measurable follows from~\cite[Proposition A.1]{CDMSZ19}. Moreover, since $g(\omega)\in\A_c$ for every $\omega\in\Omega$, we obtain the uniform bound~\eqref{eq:l_inftybound} by taking $\uzn$ as a candidate in the minimization problem defining $\mu_\ell(I,\omega)$ as in Remark~\ref{r.Xtl}. 

We next prove stationarity of the process. To this end, given $z\in\R^{d-1}$ we set $z^\nu:=O_\nu(z,0)\in\R^d$ and define a measure-preserving group action $\{\tau_z^\nu\}_{z\in\R^{d-1}}$ by setting $\tau_z^\nu:=\tau_{-z^\nu}$, where $\{\tau_z\}_{z\in\R^d}$ is the group action associated to $g$. Note that for every $I\in\mathcal{I}$ and every $z\in\R^{d-1}$ we have $Q^{\ell}(I-z)=Q^{\ell}(I)-z^\nu$. Thus, for any $u\in BV(Q^\ell(I-z);\M)$ the function $u_z\defas u(\cdot-z^\nu)$ belongs to $BV(Q^\ell(I);\M)$.
Moreover, $z^{\nu}\in \Hn$ due to the properties of $O_{\nu}$, which implies that $\uzn(\cdot-z^\nu)=\uzn$. Hence, for every $u\in BV(Q^\ell(I-z);\M)$ we have
\begin{equation*}
u=\uzn\text{  near }\partial Q^\ell(I-z)\ \Longleftrightarrow u_z=\uzn\text{ near }\partial Q^\ell(I),
\end{equation*}
while the stationarity of $g$ with respect to $\{\tau_z\}_{z\in\R^d}$ together with a change of variables yields
\begin{equation*}
\Egw (u,Q^\ell(I-z))=E_g(\tau_z^\nu\omega)(u_z,Q^\ell(I)).
\end{equation*}
Thus, we conclude by minimization that $\mu_{\ell}(I-z,\omega)=\mu_{\ell}(I,\tau_z^\nu\omega)$, which implies the stationarity of the process with respect to the lower-dimensional group action $\{\tau_z^\nu\}_{z\in\mathbb{R}^{d-1}}$. 

We conclude this step by showing that $\mu_\ell$ is subadditive. Let $I\in\mathcal{I}$ and let $(I^i)_{i=1}^k\subset\mathcal{I}$ be pairwise disjoint and such that $I=\bigcup_{i=1}^k I^i$. Fix $\eta>0$ and for any $i\in\{1,\ldots, k\}$ let $u^i\in \Adm(\uzn,Q^\ell(I^i))$ be such that 
\begin{align}\label{subad:almost optimal}
\Egw (u^i, Q^\ell(I^i))\leq \mu_{\ell}(I^i,\omega)+k^{-1}\eta.
\end{align}
Note that also the $d$-dimensional cuboids $Q^{\ell}(I^i)$ are pairwise disjoint, so that we can define a function $u\in BV(Q^{\ell}(I);\M)$ by setting 
\begin{equation*}
u(x):=
\begin{cases}
u^i(x) &\text{if}\ x\in Q^{\ell}(I^i)\ \text{for some}\ 1\leq i\leq k,\\
\uzn(x) &\text{otherwise}.
\end{cases}
\end{equation*}
Since $s_{\rm max}(I^i)\leq s_{\rm max}(I)$ for all $1\leq i\leq k$, all cuboids $Q^{\ell}(I^i)$ are contained in $Q^{\ell}(I)$, so that the function $u$ satisfies $u=\uzn$ near $\partial Q^\ell(I)$. Moreover, since $S_{\uzn}=H^\nu$, thanks to the boundary conditions satisfied by each $u^i$ and the equality $I=\bigcup_{i=1}^kI^i$ we have $S_u\cap Q^\ell(I)=S_u\cap\big(\bigcup_{i=1}^k\overline{Q^\ell(I^i)}\big)=\bigcup_{i=1}^kS_{u^i}$. Thus, using the additivity of $\Egw$ as a set function, from~\eqref{subad:almost optimal} we infer
\begin{align*}
\mu_\ell(I,\omega)\leq \Egw (u,Q^{\ell}(I))=\sum_{i=1}^k \Egw (u^i,Q^\ell(I^i))\leq\mu_\ell(I^i,\omega)+\eta.
\end{align*}
We then obtain the subadditivity of the process by the arbitrariness of $\eta>0$.

\medskip

\textbf{Step 2.} Existence of the limit 
\\
Suppose first that $\ell\in\N$; for every $t>0$ consider the sets $I_t\defas[-\frac{t}{2},\frac{t}{2})^{d-1}$. Thanks to the first equality in~\eqref{relation:mu-ell-X-t-ell} and Step 1 we can apply the multi-parameter subadditive ergodic theorem (cf. \cite[Theorem 3.11]{CDMSZ19} which is a slightly improved version of \cite[Theorem 2.4]{AkKr}) to deduce the existence of a set $\Omega_\ell^{a,b,\nu}$ of full probability and a function $g_{\ell}(\cdot,a,b,\nu):\Omega\to[0,+\infty)$ such that for every $\omega\in\Omega_\ell^{a,b,\nu}$ we have
\begin{align}\label{ex:limit:int}
g_{\ell}(\omega,a,b,\nu)=\lim_{t\to+\infty}\frac{\mu_\ell(I_t,\w)}{t^{d-1}}=\lim_{t\to +\infty}X_{t,\ell}^{a,b,\nu}(g)(\omega).
\end{align}
If instead $\ell=+\infty$, then the second equality in~\eqref{relation:mu-ell-X-t-ell} is valid for all $t>0$ and we obtain a set $\Omega_\infty^{a,b,\nu}$ of full probability and a function $g_\infty(\cdot,a,b,\nu):\Omega\to[0,+\infty)$ such that for every $\omega\in\Omega_\infty^{a,b,\nu}$ we have
\begin{align}\label{ex:limit:int-infinity}
g_\infty(\omega,a,b,\nu)=\lim_{t\to+\infty}X_{t,t}^{a,b,\nu}(g)(\omega).
\end{align}
%
Gathering~\eqref{ex:limit:int} and~\eqref{ex:limit:int-infinity} we get~\eqref{ex:limit:rational} and it remains to show the shift invariance.
To this end, fix $\omega\in\Omega_{\ell}^{a,b,\nu}$ and $z\in\R^{d-1}$, let $t\to+\infty$, and set $t^\pm\defas t\pm 2|z^\nu|$. Then, thanks to the stationarity of $g$, by an extension argument as in~\eqref{est:monotonicity-t} on can show that
\begin{equation}\label{est:extension}
X_{t^+,\ell}^{a,b,\nu}(g)(\omega)\leq X_{t,\ell}^{a,b,\nu}(g)(\tau_z^\nu\omega)+\frac{c(t^+-t)}{t^+}\leq X_{t^-,\ell}^{a,b,\nu}(g)(\omega)+\frac{c(t^+-t)}{t^+}+\frac{c(t-t^-)}{t}.
\end{equation}
Thus, from~\eqref{ex:limit:int} and~\eqref{ex:limit:int-infinity}, respectively, we deduce that
\begin{equation*}
\lim_{t\to+\infty}X_{t,\ell}^{a,b,\nu}(g)(\tau_z^\nu\omega)=g_\ell(\omega,a,b,\nu),\quad\lim_{t\to+\infty}X_{t,t}^{a,b,\nu}(g)(\tau_z^\nu,\omega)=g_\infty(\omega,a,b,\nu),
\end{equation*}
that is, $\tau_z^\nu\omega\in\Omega_{\ell}^{a,b,\nu}$ and~\eqref{eq:partialinvariance} holds true.
%
\end{proof}
%
%
Based on Lemma~\ref{l.ellfixed} we now prove Theorem~\ref{t.compressedprocess}. Namely, having at hand the almost sure existence of the limits in~\eqref{ex:limit:rational} we aim to show that we can switch the limit as $t\to+\infty$ and $\ell\to+\infty$ at least in conditional expectation to obtain~\eqref{ex:limit:compressed}.
This will be done by exploiting suitable monotonicity properties of $g_\ell$ and $\mu_\ell$ together with the group-invariance of conditional expectations with respect to the $\sigma$-algebra of $\tau$-invariant sets. The latter property might be well-known in probability theory but we include the short proof for the sake of completeness. Recall that the conditional expectation of a random variable $X\in L^1(\Omega)$ with respect to a $\sigma$-algebra $\mathcal{F}'\subset\mathcal{F}$ is the  (almost surely) uniquely defined $\mathcal{F}'$-measurable function $\mathbb{E}[X|\mathcal{F}']$ such that for any $F\in\mathcal{F}'$ we have
\begin{equation*}
\int_FX\,\mathrm{d}\mathbb{P}=\int_{F}\mathbb{E}[X|\mathcal{F}']\,\mathrm{d}\mathbb{P}.
\end{equation*}
\begin{lemma}\label{l.invariance}
Let $X\in L^1(\Omega)$ be a random variable and $\widetilde{\tau}:\Omega\to\Omega$ be a measurable, measure-preserving map. Let $\mathcal{F}_1\subset\mathcal{F}$ be a $\sigma$-algebra containing only $\widetilde{\tau}$-invariant sets,\ie $\mathbb{P}(\widetilde{\tau}(F)\Delta F)=0$ for all $F\in\mathcal{F}_1$.
Then
$$\mathbb{E}[X\circ\widetilde{\tau}|\mathcal{F}_1]=\mathbb{E}[X|\mathcal{F}_1].$$ 
\end{lemma}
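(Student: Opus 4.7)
The plan is to verify that $\mathbb{E}[X|\mathcal{F}_1]$ satisfies the two defining properties of the conditional expectation $\mathbb{E}[X\circ\widetilde{\tau}|\mathcal{F}_1]$, and then invoke its almost sure uniqueness. Since $\mathbb{E}[X|\mathcal{F}_1]$ is $\mathcal{F}_1$-measurable by construction, the only thing to check is that
\begin{equation*}
\int_F X\circ\widetilde{\tau}\,\mathrm{d}\mathbb{P}=\int_F \mathbb{E}[X|\mathcal{F}_1]\,\mathrm{d}\mathbb{P}\qquad\text{for every }F\in\mathcal{F}_1.
\end{equation*}

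The right-hand side equals $\int_F X\,\mathrm{d}\mathbb{P}$ by the defining property of $\mathbb{E}[X|\mathcal{F}_1]$, so the task reduces to showing $\int_F X\circ\widetilde{\tau}\,\mathrm{d}\mathbb{P}=\int_F X\,\mathrm{d}\mathbb{P}$. The key observation is that the $\widetilde{\tau}$-invariance hypothesis $\mathbb{P}(\widetilde{\tau}(F)\Delta F)=0$ translates (in its standard measure-theoretic form) to $\mathbf{1}_F\circ\widetilde{\tau}=\mathbf{1}_F$ almost surely. Using this together with the multiplicativity of composition, we have $\mathbf{1}_F\cdot(X\circ\widetilde{\tau})=(\mathbf{1}_F\circ\widetilde{\tau})\cdot(X\circ\widetilde{\tau})=(\mathbf{1}_F\cdot X)\circ\widetilde{\tau}$ almost surely. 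The measure-preservation of $\widetilde{\tau}$ then yields
\begin{equation*}
\int_F X\circ\widetilde{\tau}\,\mathrm{d}\mathbb{P}=\int_\Omega(\mathbf{1}_F\cdot X)\circ\widetilde{\tau}\,\mathrm{d}\mathbb{P}=\int_\Omega\mathbf{1}_F\cdot X\,\mathrm{d}\mathbb{P}=\int_F X\,\mathrm{d}\mathbb{P},
\end{equation*}
which is exactly what was needed.

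Combining the two identities, $\int_F X\circ\widetilde{\tau}\,\mathrm{d}\mathbb{P}=\int_F \mathbb{E}[X|\mathcal{F}_1]\,\mathrm{d}\mathbb{P}$ for every $F\in\mathcal{F}_1$, and the almost-sure uniqueness of the conditional expectation then gives $\mathbb{E}[X\circ\widetilde{\tau}|\mathcal{F}_1]=\mathbb{E}[X|\mathcal{F}_1]$. The only subtle point — and the one that deserves a line of comment — is the passage from the stated invariance $\mathbb{P}(\widetilde{\tau}(F)\Delta F)=0$ to the pointwise identity $\mathbf{1}_F\circ\widetilde{\tau}=\mathbf{1}_F$ a.s.\ (equivalently, $\mathbb{P}(\widetilde{\tau}^{-1}(F)\Delta F)=0$); for the applications in the paper this is unproblematic since the maps $\tau_z^\nu$ are group actions and hence invertible, so the two notions of invariance coincide. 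Modulo this remark the proof is essentially a one-line change of variables.
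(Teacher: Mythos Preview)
Your proof is correct and follows essentially the same route as the paper: both verify the defining integral identity of the conditional expectation via a change of variables combined with the $\widetilde{\tau}$-invariance of sets in $\mathcal{F}_1$, and then invoke uniqueness. Your version is in fact slightly more careful, since you flag the distinction between $\mathbb{P}(\widetilde{\tau}(F)\Delta F)=0$ and $\mathbb{P}(\widetilde{\tau}^{-1}(F)\Delta F)=0$ (the paper's change-of-variables step $\int_F X\circ\widetilde{\tau}\,\mathrm{d}\mathbb{P}=\int_{\widetilde{\tau}(F)}X\,\mathrm{d}\mathbb{P}$ implicitly uses invertibility as well), and you correctly note that this is harmless in the paper's applications because the $\tau_z^\nu$ are bijective.
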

\begin{proof}
First note that since $\widetilde{\tau}$ is measure preserving it follows that $X\circ\widetilde{\tau}\in L^1(\Omega)$. Hence $\mathbb{E}[X\circ\widetilde{\tau}|\mathcal{F}_1]$ is well-defined and in particular $\mathcal{F}_1$-measurable. Next fix $F\in\mathcal{F}_1$. By a change of variables and $\widetilde{\tau}$-invariance of $F$ we have
\begin{equation*}
\int_F\mathbb{E}[X\circ\widetilde{\tau}|\mathcal{F}_1](\w)\,\mathrm{d}\mathbb{P}=\int_{F}X(\widetilde{\tau}(\w))\,\mathrm{d}\mathbb{P}=\int_{\widetilde{\tau} (F)}X(\w)\,\mathrm{d}\mathbb{P}=\int_FX(\w)\,\mathrm{d}\mathbb{P},
\end{equation*}
where the first equality follows from the definition of the conditional expectation. This proves the claim.
\end{proof}
\begin{proof}[Proof of Theorem~\ref{t.compressedprocess}]
The first part of the proof is an immediate consequence of Lemma~\ref{l.ellfixed} in the case $\ell=+\infty$, Remark~\ref{r.ex:limit:rational} and the fact that for every $(a,b)\in\M\times\M$ and $\omega\in\Omega$ the restrictions of the mappings $\nu\mapsto\liminf_{t} X_{t,t}^{a,b,\nu}(g)(\omega)$, $\nu\mapsto \limsup_{t} X_{t,t}^{a,b,\nu}(g)(\omega)$ to $\Sd\setminus\{-e_d\}$ are continuous (the latter can be shown arguing word by word as in~\cite[Lemma 5.5]{CDMSZ19}). Namely, by setting
\begin{align*}
\widehat{\Omega}\defas\bigcap_{\substack{(a,b)\in\M\times\M\\\nu\in\Sph^{d-1}\cap\mathbb{Q}^{d\times d}}}\Omega_{\infty}^{a,b,\nu}\quad\text{and}\quad g_{\rm hom}(\cdot,a,b,\nu)\defas g_\infty(\cdot,a,b,\nu),
\end{align*}
we clearly have that $\mathbb{P}(\widehat{\Omega})=1$, while~\eqref{ex:limit:rational} together the above mentioned continuity ensures that~\eqref{ex:limit:cubes} holds true for every $(a,b)\in\M\times\M$, $\nu\in\Sph^{d-1}$ and $\omega\in\widehat{\Omega}$. Eventually, in view of Remark~\ref{r.ex:limit:rational}, $g_{\rm hom}=g_\infty$ satisfies the required invariance property and hence is deterministic in the case that $g$ is ergodic.

\medskip
We now come to prove the second part of Theorem~\ref{t.compressedprocess}. 
Let $\nu\in\Sd$ be fixed, $\Omega^\nu$ as in Remark~\ref{r.ex:limit:rational} and for every $(a,b)\in\M\times\M$ and $\ell\in\N\cup\{+\infty\}$ let $g_\ell(\omega,a,b,\nu)$ be as in Lemma~\ref{l.ellfixed}. The main part of the proof consists in showing that there exists a set $\widehat{\Omega}^{\nu}\subset\Omega^\nu$ of full probability such that
\begin{equation}\label{claim:limitell}
\lim_{\ell\to+\infty}g_\ell(\omega,a,b,\nu)=g_{\infty}(\omega,a,b,\nu)\quad\text{for every}\ \omega\in\widehat{\Omega}^{\nu}.
\end{equation}
To this end, we fix $(a,b)\in\M\times\M$ and we compute $g_{\ell}(\w,a,b,\nu)$ using $\mu_\ell([-2^k,2^k),\omega)$, where we use the shorthand $\mu_\ell\defas\mu_\ell^{a,b,\nu}$. First note that \eqref{eq:partialinvariance} implies that $g_{\ell}(\cdot,a,b,\nu)$ is measurable with respect to the $\sigma$-algebra of $\{\tau^\nu_{z}\}_{z\in\mathbb{\R}^{d-1}}$-invariant sets defined by
\begin{equation*}
\mathcal{F}^\nu=\{F\in\mathcal{F}:\,\mathbb{P}((\tau^\nu_zF)\Delta F)=0\quad\forall\, z\in\R^{d-1}\}.
\end{equation*}
Hence, $g_{\ell}(\w,a,b,\nu)=\mathbb{E}[g_{\ell}(\cdot,a,b,\nu)|\mathcal{F}^\nu](\w)$ for a.e. $\w$. Together with the uniform bound~\eqref{eq:l_inftybound} and the dominated convergence theorem for the conditional expectation this ensures that almost surely we have
\begin{equation}\label{eq:gell}
g_{\ell}(\w,a,b,\nu)=\mathbb{E}[g_{\ell}(\cdot,a,b,\nu)|\mathcal{F}^\nu ](\w)=\lim_{k\to +\infty}\frac{\mathbb{E}[\mu_{\ell}([-2^k,2^k)^{d-1},\cdot)|\mathcal{F}^\nu ](\w)}{(2^{k+1})^{(d-1)}}.
\end{equation}
Thanks to~\eqref{est:monotonicity-ell} this in turn implies that almost surely
\begin{equation}\label{eq:inf-ell-lim-k}
\lim_{\ell\to+\infty}g_{\ell}(\w,a,b,\nu)=\inf_{\ell\in\N}\lim_{k\to +\infty}\frac{\mathbb{E}[\mu_{\ell}([-2^k,2^k)^{d-1},\cdot)|\mathcal{F}^\nu ](\w)}{(2^{k+1})^{(d-1)}}.
\end{equation}
We now show that also the limit in $k$ in~\eqref{eq:inf-ell-lim-k} is an infimum, which will then allow us to switch the infima in $\ell$ and $k$. 
To this end, we notice that for every $k\in\N$ the cube $[-2^{k+1},2^{k+1})^{d-1}$ can be partitioned into $n_d:=2^{d-1}$ integer-translated disjoint copies of $[-2^k,2^k)^{d-1}$,\ie there exist $z_1,\ldots,z_{n_d}\in\mathbb{Z}^{d-1}$ with 
\begin{equation*}
[-2^{k+1},2^{k+1})^{d-1}=\bigcup_{n=1}^{n_d}\big(z_n+[-2^k,2^k)^{d-1}\big),\quad \left(z_n+[-2^k,2^k)^{d-1}\right)\cap \left(z_m+[-2^k,2^k)^{d-1}\right)=\emptyset\; \text{ for }\; n\neq m. 
\end{equation*}
Hence, using the subadditivity of the stochastic process $\mu_{\ell}$ and its $\{\tau_z^\nu\}_{z\in\R^{d-1}}$-stationarity we can write
\begin{equation*}
\mu_{\ell}([-2^{k+1},2^{k+1})^{d-1},\w)\leq \sum_{n=1}^{n_d}\mu_{\ell}([-2^k,2^k)^{d-1},\tau^\nu_{z_n}\w).
\end{equation*}
Taking the conditional expectation with respect to the $\sigma$-algebra $\mathcal{F}^\nu $ and using that it is linear and order preserving, by Lemma \ref{l.invariance} we find that almost surely
\begin{align*}
\mathbb{E}[\mu_{\ell}([-2^{k+1},2^{k+1})^{d-1},\cdot)|\mathcal{F}^\nu ](\w)\leq&\sum_{n=1}^{n_d}\mathbb{E}[\mu_\ell([-2^k,2^k)^{d-1},\cdot)\circ \tau_{z_n}^\nu|\mathcal{F}^{\nu}](\w)
\\
=& n_d\mathbb{E}[\mu_{\ell}([-2^k,2^k)^{d-1},\cdot)|\mathcal{F}^\nu ](\w).
\end{align*}
Dividing this estimate by $(2^{k+2})^{(d-1)}$ we see that the map $k\mapsto \frac{\mathbb{E}[\mu_{\ell}([-2^{k},2^{k})^{d-1},\cdot)|\mathcal{F}^\nu ](\w)}{(2^{k+1})^{(d-1)}}$ is almost surely decreasing. Together with~\eqref{eq:gell} this implies that almost surely
\begin{equation}\label{eq:inf:k}
g_\ell(\omega,a,b,\nu)=\inf_{k\in\N}\frac{\mathbb{E}[\mu_{\ell}([-2^k,2^k)^{d-1},\cdot)|\mathcal{F}^\nu ](\w)}{(2^{k+1})^{(d-1)}}\quad\text{for every}\ \ell\in\N\cup\{+\infty\}.
\end{equation}
Eventually, from~\eqref{est:monotonicity-ell} and~\eqref{relation:mu-ell-X-t-ell} we infer that also $\ell\mapsto\mu_\ell([-2^k,2^k),\omega)$ is almost surely decreasing. Since the monotonicity is preserved by the conditional expectation, gathering~\eqref{eq:inf-ell-lim-k} and~\eqref{eq:inf-ell-lim-k} we get
\begin{equation}\label{inf:limit:ell}
\lim_{\ell\to +\infty}g_\ell(\omega,a,b,\nu)=\inf_{
k,\ell\in\N}\frac{\mathbb{E}[\mu_{\ell}([-2^k,2^k)^{d-1},\cdot)|\mathcal{F}^\nu ](\w)}{(2^{k+1})^{(d-1)}}=\inf_{k\in\N}\lim_{\ell\to+\infty}\frac{\mathbb{E}[\mu_{\ell}([-2^k,2^k)^{d-1},\cdot)|\mathcal{F}^\nu ](\w)}{(2^{k+1})^{(d-1)}}
\end{equation}
almost surely. 
For fixed $k\in\N$ we have $\mu_{\ell}([-2^k,2^k)^{d-1},\omega)=\mu_\infty([-2^k,2^k)^{d-1},\omega)$ for $\ell> 2^{k+1}$. 
Thus, for fixed $k\in\N$, on the right-hand side of~\eqref{inf:limit:ell} we can pass to the limit with respect to $\ell$ inside the conditional expectation using again the corresponding dominated convergence theorem to deduce that almost surely
\begin{equation*}
\lim_{\ell\to +\infty}g_{\ell}(\w,a,b,\nu)=\inf_{k\in\N}\frac{\mathbb{E}[\mu_{\infty}([-2^k,2^k)^{d-1},\cdot)|\mathcal{F}^\nu ](\w)}{(2^{k+1})^{(d-1)}}=g_\infty(\omega,a,b,\nu)=g_{\rm hom}(\omega,a,b,\nu),
\end{equation*}
where the second equality follows from~\eqref{eq:inf:k} applied with $\ell=+\infty$. Hence, there exists $\widehat{\Omega}^{\nu}$ with full probability such that~\eqref{claim:limitell} is satisfied.

\medskip
Thanks to~\eqref{claim:limitell} we are now able to conclude as follows.
We fix sequences $0<\ell_t\leq t$ such that $\ell_t\to +\infty$ as $t\to +\infty$. Then for any fixed $\ell\in\N$ and $t$ large~\eqref{est:monotonicity-ell} implies that
\begin{equation*}
X_{t,t}^{a,b,\nu}(g)(\w)\leq X_{t,\ell_t}^{a,b,\nu}(g)(\w)\leq X_{t,\ell}^{a,b,\nu}(g)(\omega).
\end{equation*}
Passing to the limit in $t\to +\infty$, the left-hand side converges to $g_{\rm hom}(\w,a,b,\nu)$ for $\omega\in\Omega^\nu$, while for the right-hand side we use \eqref{ex:limit:rational}, so that
\begin{equation*}
g_{\rm hom}(\w,\a,b,\nu)\leq \liminf_{t\to +\infty}X_{t,\ell_t}^{a,b,\nu}(g)(\w)\leq\limsup_{t\to +\infty}X_{t,\ell_t}^{a,b,\nu}(g)(\w)\leq g_{\ell}(\w,a,b,\nu),
\end{equation*}
for every $\omega\in\Omega^\nu$. Thus, letting $\ell\to +\infty$ and using~\eqref{claim:limitell} we deduce that for all $\omega\in\widehat\Omega^{\nu}$ we have~\eqref{ex:limit:compressed}, hence Theorem~\ref{t.compressedprocess} is proved.
\end{proof}
\subsection{Estimating the oscillation and concentration inequalities: Proof of Theorem~\ref{t.concentration}}
\begin{proof}[Proof of Theorem \ref{t.concentration}]
Throughout the proof $(a,b)\in\mathcal{M}\times\M$ and $\nu\in \mathbb{S}^{d-1}$ will be fixed, so we write $X_{t,\ell_t}(g)=X_{t,\ell_t}^{a,b,\nu}(g)$ to reduce notation. Moreover, for every $U\subset\R^d$ let the quantity $\partial^{\rm osc}_{g,U}X_{t,\ell_t}(g)$ be as in~\eqref{def:oscillation} with $X=X_{t,\ell_t}$. Let $t\geq\ell_t\geq 1$ be fixed; 
thanks to \cite[Proposition 1.10]{DG1} and Assumption \ref{ass:2}
there exists a constant $C>0$ such that for every $p\geq 1$ we have the estimate
\begin{align}\label{est:spectralgap}
\E[(X_{t,\ell_t}(g)-\E[X_{t,\ell_t}(g)])^{2p}]\leq(Cp^2)^p\E\Bigg[\int_0^{+\infty}\Bigg(\int_{\R^d}\Big(\partial^{\rm osc}_{g, B_{2(s+1)}(x)}X_{t,\ell_t}(g)\Big)^2\dx\Bigg)^{p}(s+1)^{-dp}\pi(s)\,\mathrm{d}s\Bigg].
\end{align}
Thus, to obtain~\eqref{est:variance} we fix $\omega\in\Omega$ and we suitably bound the term
\begin{align}\label{spectralgap}
\int_0^{+\infty}\Bigg(\int_{\R^d}\Big(\partial^{\rm osc}_{g, B_{2(s+1)}(x)}X_{t,\ell_t}(g) (\omega) \Big)^2\dx\Bigg)^{p}(s+1)^{-dp}\pi(s)\ds.
\end{align}
We first show that in~\eqref{spectralgap} we can reduce the domain of integration in $x$.
In fact, for given $s>0$ suppose that $x\in\R^d$ is such that 
\begin{equation}\label{cond:x}
B_{2(s+1)}(x)\cap R^\nu_{t,\ell_t}=\emptyset
\end{equation}
and let $g'\in\A_c$ with $g'=g(\w)$ on $\R^d\setminus B_{2(s+1)}(x)\times\R^m\times\Sd$. Then clearly $X_{t,\ell_t}(g')=X_{t,\ell_t}(g)(\omega)$, which implies that $\partial^{\rm osc}_{g,B_{2(s+1)}(x)} X_{t,\ell_t}(g)=0$. Thus, since~\eqref{cond:x} is satisfied for all $x\in\R^d\setminus R^\nu(s,t)$ with 
\begin{equation*}
R^\nu(s,t)\defas R^\nu_{t+4(s+1),\ell_t +4(s+1)},
\end{equation*}
we have
\begin{align}\label{integral:osc}
\int_{\R^d}\Big(\partial^{\rm osc}_{g, B_{2(s+1)}(x)}X_{t,\ell_t}(g) (\omega) \Big)^2\dx=\int_{R^\nu(s,t)}\Big(\partial^{\rm osc}_{g, B_{2(s+1)}(x)}X_{t,\ell_t}(g)(\omega)\Big)^2\dx.
\end{align}
We show that there exists a dimensional constant $C=C_d>0$ such that for all $t\geq \ell_t\geq 1$ and for all $s>0$ and $x\in R^\nu(s,t)$ we have
\begin{align}\label{est:osc}
\partial^{\rm osc}_{g,B_{2(s+1)}(x)}X_{t,\ell_t}(g)\leq C\Big(\frac{s+1}{t}\Big)^{d-1}.
\end{align}
We distinguish the following two exhaustive cases:
\begin{enumerate}[label=(\alph*)]
\item\label{inclusion} $s>0$ and $x\in R^\nu(s,t)$ are such that $R^\nu_{t,\ell_t}\subset B_{2(s+1)}(x)$;
\item\label{intersection} $s>0$ and $x\in R^\nu(s,t)$ are such that $R^\nu_{t,\ell_t}\cap(\R^d\setminus B_{2(s+1)}(x))\neq\emptyset$.
\end{enumerate}
Suppose that we are in the case~\ref{inclusion} and let $g'\in\A_c$ be such that $g'=g$ on $\R^d\setminus B_{2(s+1)}(x)\times\R^m\times\Sd$. Then, to obtain \eqref{est:osc} it suffices to use~\eqref{est:oscillation-trivial}, since the inclusion $R^\nu_{t,\ell_t}\subset B_{2(s+1)}(x)$ implies that $2(s+1)\geq t/2$, from which we readily deduce that
\begin{align*}
\partial^{\rm osc}_{g,B_{2(s+1)}(x)}X_{t,\ell_t}(g)\leq 2c\leq 4^{d-1}2c\Big(\frac{s+1}{t}\Big)^{d-1}.
\end{align*}
We now prove \eqref{est:osc} in the case \ref{intersection}, where the above construction is not optimal. Instead, we choose $u\in BV(R^\nu_{t,\ell_t};\M)$ such that $u=\uzn$ near $\partial R^\nu_{t,\ell_t}$ and
\begin{align}\label{est:almost-optimal}
\int_{S_u\cap R^\nu_{t,\ell_t}}g(\w,y,u^+-u^-,\nu_u)\dHd\leq t^{d-1}X_{t,\ell_t}(g)(\w)+1.
\end{align}
Then we define a new function $\tilde{u}\in BV(R^\nu_{t,\ell_t};\M)$ by setting
\begin{align*}
\tilde{u}:=
\begin{cases}
u &\text{in }R^\nu_{t,\ell_t}\setminus \overline{B}_{2(s+1)}(x),\\
\uzn &\text{in } R^\nu_{t,\ell_t}\cap \overline{B}_{2(s+1)}(x).
\end{cases}
\end{align*}
By construction $\tilde{u}=\uzn$ near $\partial R^\nu_{t,\ell_t}$, hence for every $g'\in\A_c$ with $g'=g$ on $\R^d\setminus B_{2(s+1)}(x)\times\R^m\times\Sd$ we have
\begin{align}\label{est:Xgprime}
t^{d-1}X_{t,\ell_t}(g') &\leq\int_{S_{\tilde{u}}\cap R^\nu_{t,\ell_t}} g'(y,\tilde{u}^+-\tilde{u}^-,\nu_{\tilde{u}})\dHd\nonumber
\\
&\leq\int_{S_u\cap R^\nu_{t,\ell_t}}g(\w,y,u^+-u^-,\nu_u)\dHd+c\Hd\big(S_{\tilde{u}}\cap\overline{B}_{2(s+1)}(x)\big)\,.
\end{align}
By construction, we have
\begin{align}\label{est:measure-jumpset}
\Hd\big(S_{\tilde{u}}\cap\overline{B}_{2(s+1)}(x)\big)&\leq\Hd(\Hn\cap\overline{B}_{2(s+1)})+\Hd(\partial B_{2(s+1)})\nonumber
\\
&\leq{\rm diam}(B_{2(s+1)})^{d-1}+\Hd(\partial B_{2(s+1)})\leq C(s+1)^{d-1},
\end{align}
for some $C=C_d>0$. Thus, gathering~\eqref{est:almost-optimal}, \eqref{est:Xgprime}, and \eqref{est:measure-jumpset}, and dividing by $t^{d-1}$, we obtain
\begin{align*}
X_{t,\ell_t}(g')\leq  X_{t,\ell_t}(g)(\omega)+C\Big(\frac{s+1}{t}\Big)^{d-1}
\end{align*}
with $C$ only depending on $d$. Then,~\eqref{est:osc} follows by passing to the supremum in $g'$ and using the triangular inequality.

Using~\eqref{est:osc} we can estimate the right-hand side in~\eqref{integral:osc} via
\begin{equation}\label{est:integral1}
\int_{R^\nu(s,t)}\Big(\partial^{\rm osc}_{g, B_{2(s+1)}(x)}X_{t,\ell_t}(g)(\w)\Big)^2\dx\leq 
C\Big(\frac{s+1}{t}\Big)^{2(d-1)}|R^\nu(s,t)|.
\end{equation}
Moreover, since $1\leq\ell_t\leq t$, we can bound the volume $|R^\nu(s,t)|=(t+4(s+1))^{d-1}(\ell_t+4(s+1))$ via
\begin{align*}
|R^\nu(s,t)|\leq C(s+1)^dt^{d-1}\ell_t
\,,
\end{align*}
hence the integral in~\eqref{est:integral1} can be further estimated via
\begin{align*}
\int_{R^\nu(s,t)}\Big(\partial^{\rm osc}_{g, B_{2(s+1)}(x)}X_{t,\ell_t}(g)(\w)\Big)^2\dx\leq Ct^{1-d}\ell_t (s+1)^{3d-2}.
\end{align*}
Eventually, combining the above inequality with~\eqref{integral:osc} and~\eqref{est:spectralgap} and integrating over $s\in(0,+\infty)$ and $\omega\in\Omega$ we obtain
\begin{align*}
\E[|X_{t,\ell_t}(g)-\E[X_{t,\ell_t}(g)]|^{2p}]\leq (c_d p^2)^p t^{p(1-d)}\ell_t^p\int_0^{+\infty}(s+1)^{2p(d-1)}\pi(s)\ds
\end{align*}
with $c_d>0$.
%
\end{proof}
Finally, we derive strong concentration estimates in the case of an exponentially decaying weight $\pi$.
\begin{proof}[Proof of Corollary \ref{c.expweight}]
	We first bound the integral appearing in Theorem \ref{t.concentration}. Without loss of generality we can assume that $\pi(s)=C\exp(-\tfrac{s}{C})$ with $C\geq 1$. Using two changes of variables we have
	\begin{align*}
		\int_0^{+\infty}(s+1)^{2p(d-1)}\pi(s)\ds\overset{s+1=y}{=}&C\int_1^{+\infty}y^{2p(d-1)}\exp(-\tfrac{y}{C})\exp(\tfrac{1}{C})\,\mathrm{d}y
		\\
		\overset{y/C=x}{\leq} &C^{2p(d-1)+2}\exp(\tfrac{1}{C}) \int_{0}^{+\infty}x^{2p(d-1)}\exp(-x)\,\mathrm{d}x= C_d^p\,\Gamma(2p(d-1)+1).
	\end{align*}
	In what follows the constant $C_d$ may change, but will only depend on $d$. We use the following elementary bound on the $\Gamma$-function: $\Gamma(x+1)\leq 2\left(\frac{2x}{e}\right)^x$ for all $x>0$. Hence we can estimate the last factor by
	\begin{equation*}
		\Gamma(2p(d-1)+1)\leq  2\left(\frac{4p(d-1)}{ e}\right)^{2p(d-1)}=C_d^p p^{2p(d-1)}.
	\end{equation*}
	Hence from Theorem \ref{t.concentration} we infer that (upon increasing the dimensional constant $C_d$)
	\begin{equation}\label{est:expectation1}
		\mathbb{E}\left[\left(\frac{1}{C_d}\left|X_{t,\ell_t}^{a,b,\nu}(g)-\mathbb{E}[X_{t,\ell_t}^{a,b,\nu}(g)]\right|^{2}\right)^p\right]\leq p^{2pd}t^{p(1-d)}\ell_t^p\quad\text{for all $p\geq 1$.}
	\end{equation}
	We also need an estimate for $p\in [0,1)$. Since the function $x^x$ is bounded uniformly away from zero on $[0,1]$, we have $t^{p(1-d)}\ell_t^p\leq C_d p^{2pd}t^{p(1-d)}\ell_t^p$ upon further increasing $C_d$. Thus, applying Jensen's inequality with $p\in[0,1)$ and using~\eqref{est:expectation1} with $p=1$ leads to 
	\begin{align*}
		\mathbb{E}\left[\left(\frac{1}{C_d}\left|X_{t,\ell_t}^{a,b,\nu}(g)-\mathbb{E}[X_{t,\ell_t}^{a,b,\nu}(g)]\right|^{2}\right)^p\right]\leq & \mathbb{E}\left[\frac{1}{C_d}\left|X_{t,\ell_t}^{a,b,\nu}(g)-\mathbb{E}[X_{t,\ell_t}^{a,b,\nu}(g)]\right|^{2}\right]^p
		\\
		\leq &t^{p(1-d)}\ell_t^p
		\leq C_d p^{2pd}t^{p(1-d)}\ell_t^p.
	\end{align*}
	Therefore we conclude that
	\begin{equation*}
		\mathbb{E}\left[\left(\frac{1}{C_d}\left|X_{t,\ell_t}^{a,b,\nu}(g)-\mathbb{E}[X_{t,\ell_t}^{a,b,\nu}(g)]\right|^{2}\right)^p\right]\leq
		C_dp^{2pd}t^{p(1-d)}\ell_t^p\quad\forall\, p\geq 0.
	\end{equation*}
	For $n\in\N$ we set $p_n=\tfrac{n}{2d}$. Then the above estimate implies
	\begin{equation*}
		\mathbb{E}\left[\left(\frac{1}{C_d}\left|X_{t,\ell_t}^{a,b,\nu}(g)-\mathbb{E}[X_{t,\ell_t}^{a,b,\nu}(g)]\right|^{\frac{1}{d}}\right)^n\right]\leq
		C_d\left(\frac{n}{2d}\right)^{n}t^{n\frac{(1-d)}{2d}}\ell_t^{\frac{n}{2d}}\leq  C_d\, n!\,\left(\frac{3}{2d}\right)^n\left(t^{1-d}\ell_t\right)^{\tfrac{n}{2d}} \quad\forall\, n\in \N,
	\end{equation*}
	where we used the estimate $n^n\leq 3^n n!$ (valid for all $n\in\N$). For $n\in\N$ we can absorb the factor $C_d$ in the left-hand side. Dividing by $n!(t^{1-d}\ell_t)^\frac{n}{2d}$, summing the resulting estimate over $n\in\N$ and exchanging summation and expectation we deduce that
	\begin{equation}\label{est:expect1}
		\mathbb{E}\left[\exp\left(\frac{1}{C_d}\left|\frac{X_{t,\ell_t}^{a,b,\nu}(g)-\mathbb{E}[X_{t,\ell_t}^{a,b,\nu}(g)]}{\sqrt{t^{1-d}\ell_t}}\right|^{\frac{1}{d}}\right)\right]\leq  \sum_{n\geq 0} \left(\frac{3}{2d}\right)^n=\frac{1}{1-\left(\frac{3}{2d}\right)}\leq 4.
	\end{equation}
	This proves the first estimate in Theorem \ref{c.expweight}.
	To prove the second one we observe that~\eqref{ex:limit:compressed} together with~\eqref{eq:energybound} and the dominated convergence theorem implies that $\mathbb{E}[X_{t,\ell_t}^{a,b,\nu}(g)]\to g_{\rm hom}(a,b,\nu)$. Thus, for any $\eta>0$ we have
	\begin{equation}\label{est:concentration1}
	\mathbb{P}\left(|X_{t,\ell_t}^{a,b,\nu}(g)-g_{\rm hom}(a,b,\nu)|>\eta\right) \leq \mathbb{P}\left(|X_{t,\ell_t}^{a,b,\nu}(g)-\mathbb{E}[X_{t,\ell_t}^{a,b,\nu}(g)]|>\eta/2\right),
	\end{equation}
	for $t$ sufficiently large. Moreover, applying Markov's inequality and using~\eqref{est:expect1} we infer
	\begin{equation}\label{est:concentration2}
	\begin{split}
		\mathbb{P}\Big(|X_{t,\ell_t}^{a,b,\nu}(g) &-\mathbb{E}[X_{t,\ell_t}^{a,b,\nu}(g)]|>\eta/2\Big)\\
		&=\mathbb{P}\Bigg(\exp\bigg(\frac{1}{C_d}\bigg|\frac{X_{t,\ell_t}^{a,b,\nu}(g)-\mathbb{E}[X_{t,\ell_t}^{a,b,\nu}(g)]}{\sqrt{t^{1-d}\ell_t}}\bigg|^{\frac{1}{d}}\bigg)>\exp\bigg(\frac{1}{C_d}\bigg(\frac{\eta}{2\sqrt{t^{1-d}\ell_t}}\bigg)^{\tfrac{1}{d}}\bigg)\Bigg)
		\\
		&\leq 4 \exp\Bigg(-\frac{1}{C_d}\left(\frac{\eta}{2\sqrt{t^{1-d}\ell_t}}\right)^{\tfrac{1}{d}}\Bigg).
	\end{split}
	\end{equation}
	Hence, gathering~\eqref{est:concentration1}--\eqref{est:concentration2}, taking the logarithm, and passing to the limsup in $t$ we deduce that 
	\begin{align*}
		\limsup_{t\to +\infty}\left((t^{1-d}\ell_t)^{\tfrac{1}{2d}} \log\left(\mathbb{P}\left(|X_{t,\ell_t}^{a,b,\nu}(g)-g_{\rm hom}(a,b,\nu)|>\eta\right)\right)\right)\leq \limsup_{t\to +\infty}\,(t^{1-d}\ell_t)^{\tfrac{1}{2d}}\log(4)-\frac{1}{C_d}\left(\frac{\eta}{2}\right)^{\frac{1}{d}}.
	\end{align*}
	Note that $t^{1-d}\ell_t\leq t^{2-d}\leq 1$ for $t\geq \ell_t\geq 1$. If along the particular sequence $t^{1-d}\ell_t\to 0$, then we conclude by the above estimate. If the limsup is realized by a sequence such that $t^{1-d}\ell_t\geq c>0$, then the estimate in Corollary \ref{c.expweight} is trivial since $X_{t,\ell_t}^{a,b,\nu}(g)$ converges in probability to $g_{\rm hom}(a,b,\nu)$, so that the logarithmic term is negative for $t$ large enough. 
\end{proof}
\subsection{Almost plane-like formulas for $\Z^d$-stationary integrands}
In this subsection we show how to extend Theorem \ref{t.compressedprocess} to models with $\Z^d$-stationarity assuming quantitative concentration inequalities in form of a multi-scale functional inequality. 
\begin{proof}[Proof of Proposition \ref{p.expectation}]
	Fix $\nu\in\mathbb{S}^{d-1}$, $(a,b)\in\mathcal{M}\times\M$ and consider sequences $t_k,t_n'\to +\infty$ and $\ell_k,\ell_n'\to +\infty$ such that $0<\ell_k\leq t_k$ and $0<\ell_n'\leq t_n'$. For $n\in\N$ choose $K_0=K_0(n)\geq n\in\N$ such that for all $k\geq K_0$ we have $\ell_k\geq \ell'_n+2\sqrt{d}$ and $t_k\geq t_n'+2\sqrt{d}$. Consider the collection of integer vertices
	\begin{equation*}
		I_{n,k}=\{z\in \Z^{d-1}:\,Q_{n,z}:=t'_nz+(-\tfrac{t'_n}{2},\tfrac{t'_n}{2})^{d-1}\subset (-\tfrac{t_k-2\sqrt{d}}{2},\tfrac{t_k-2\sqrt{d}}{2})^{d-1}\}.
	\end{equation*}
	Note that with the orthogonal matrix $O_{\nu}$ introduced in \eqref{eq:matrix}, for every $z\in I_{n,k}$ it holds that \begin{equation}\label{eq:inclusion}
		O_{\nu}(Q_{n,z}\times\{0\})=t'_nO_{\nu}(z,0)+t'_n(Q^{\nu}\cap H^{\nu})\subset (t_k-2\sqrt{d})(Q^{\nu}\cap H^{\nu}).
	\end{equation} 
	Moreover, for $t'_n\geq 1$ we also infer that
	\begin{equation}\label{eq:almostfilling}
		\bigcup_{z\in I}O_{\nu}(Q_{n,z}\times\{0\})\supset (t_k-4\sqrt{d}t'_n)(Q^{\nu}\cap H^{\nu}).
	\end{equation}
	For each $z\in I_{n,k}$ we decompose the vector $t'_nO_{\nu}(z,0)\in H^{\nu}$ into its integer part and a remainder writing
	\begin{equation}\label{eq:decomposition}
	t'_nO_{\nu}(z,0)=z_n(z)-y_n(z),
	\end{equation}
	with $z_n(z)\in\Z^d$, $y_n(z)\in\R^d$ and $|y_n(z)|\leq\sqrt{d}$.
	 Then, combining~\eqref{eq:inclusion} and~\eqref{eq:decomposition} we obtain that
	\begin{equation}\label{eq:Qnz}
		O_{\nu}(Q_{n,z}\times\{0\})+y_n(z)=z_n(z)+t_n'(Q^{\nu}\cap H^{\nu}).
	\end{equation}
	Since $|y_n(z)|\leq\sqrt{d}$, it follows from \eqref{eq:inclusion} that $O_{\nu}(Q_{n,z}\times\{0\})+y_n(z)\subset t_k Q^{\nu}$ for all $z\in I_{n,k}$. In particular, since $\ell_k\geq \ell'_n+2\sqrt{d}$ we conclude that 
	\begin{equation}\label{eq:shiftfullinclusion}
		A_{n,\nu}(z):=O_{\nu}\left(Q_{n,z}\times \Big(-\tfrac{\ell_n'}{2},\tfrac{\ell_n'}{2}\Big)\right)+y_n(z)\subset\subset R^{\nu}_{t_k,\ell_k}.
	\end{equation} 
	Moreover, \eqref{eq:Qnz} implies that $A_{n,\nu}(z)=z_n(z)+R_{t'_n,\ell'_n}^{\nu}$ is an integer translate of $R^{\nu}_{t_n',\ell_n'}$, so that by stationarity of $g$ the random variables $Y_{n,z}$ defined by setting
	\begin{equation*}
		Y_{n,z}(\w)\defas\inf\Big\{E_{g}(\w)(u,A_{n,\nu}(z))\colon u\in \Adm(u_{y_n(z)}^{a,b,\nu},A_{n,\nu}(z))\Big\}
	\end{equation*}
	have the same distribution as $(t'_n)^{d-1}X_{t_n',\ell_n'}^{a,b,\nu}(g)$ for all $z\in I_{n,k}$ (and are measurable). Note that the boundary value has changed since in general $y_n(z)\notin H^{\nu}$. Let us number these random variables by numbering the finitely many elements in $I_{n,k}$,\ie we write $I_{n,k}=\{z^1,\ldots,z^r\}$  with $r=r(n,k)$. Since the cubes $Q_{n,z}$ are pairwise disjoint, it follows that
	\begin{equation}\label{eq:numberofsmallcubes}
		r\leq\left(\frac{t_k}{t'_n}\right)^{d-1}.
	\end{equation}
	For $i=1,\ldots,r$ let $u_{n}^i$ be a candidate for the optimization problem defining $Y_{n,z^i}(\w)$. We now define $v\in BV(R_{t_k,\ell_k}^\nu;\mathcal{M})$ by setting
	\begin{equation*}
		v(x)\defas
		\begin{cases}
			u_{n}^i(x)&\mbox{if $x\in A_{n,\nu}(z^i)\setminus \bigcup_{j=1}^{i-1}A_{n,\nu}(z^j)$ for some $i=1,\ldots,r$,}
			\\
			u^{a,b,\nu}(x) &\mbox{otherwise.}	
		\end{cases}
	\end{equation*}
	Thanks to~\eqref{eq:shiftfullinclusion} the function $v$ is admissible for the minimum problem defining $X_{t_k,\ell_k}^{a,b,\nu}(g)$.
	In order to estimate its energy, we split the jumpset into three different parts: the portion inside a set $A_{n,\nu}(z^i)$, on the boundary of some $A_{n,\nu}(z^i)$, and in the complement of $\bigcup_{i=1}^r \overline{A_{n,\nu}(z^i)}$. From the definition of $v$ we infer that
	\begin{align}\label{eq:splittingincases}
		E_{g}(\w)(v, R_{t_k,\ell_k}^{\nu})\leq& \sum_{i=1}^r E_{g}(\w)(u_{n}^i,A_{n,\nu}(z^i))\nonumber
		\\
		&+c\sum_{i=1}^r\mathcal{H}^{d-1}(\partial A_{n,\nu}(z^i)\cap S_v)+c\mathcal{H}^{d-1}\bigg(\Big(R_{t_k,\ell_k}^{\nu}\setminus\bigcup_{i=1}^r\overline{A_{n,\nu}(z^i)}\Big)\cap H^{\nu}\bigg).
	\end{align}
	We argue that the terms in the second line are asymptotically negligible. We start with the last term, which can be estimated using a purely geometrical argument. Note that when $x\in\left(R_{t_k,\ell_k}^{\nu}\setminus\bigcup_{i=1}^r\overline{A_{n,\nu}(z^i)}\right)\cap H^{\nu}$, then there are two exhaustive cases:
	\begin{align*}
		&i)\quad x\in t_k (Q^{\nu}\cap H^{\nu})\setminus \bigcup_{i=1}^r O_{\nu}(Q_{n,z^i}\times\{0\}),
		\\
		&ii)\quad x\in  O_\nu(Q_{n,z^i}\times\{0\})\setminus \overline{A_{n,\nu}(z^i)}\quad\text{for some }i.
	\end{align*}
	In the first case we can use \eqref{eq:almostfilling} and deduce that
	\begin{equation}\label{eq:firstcase}
		\mathcal{H}^{d-1}\left(t_k (Q^{\nu}\cap H^{\nu})\setminus \bigcup_{i=1}^r O_{\nu}(Q_{n,z^i}\times\{0\})\right)\leq t_k^{d-1}-(t_k-4\sqrt{d}t'_n)^{d-1}\leq C t_k^{d-2}t'_n.	
	\end{equation}
	In the second case, note that there exists a point $y$ on the segment $[0,y_n(z^i)]$ such that $x+y\in\partial A_{n,\nu}(z^i)=z_n(z^i)+\partial R_{t'_n,\ell'_n}^{\nu}$.
	In view of~\eqref{eq:decomposition} we have $z_n(z^i)-y_n(z^i)\in H^\nu$. Since also $x\in H^{\nu}$, we infer that
	\begin{equation*}
		|\langle x+y-z_n(z^i),\nu\rangle|=|\langle y-z_n(z^i),\nu\rangle|=|\langle y-y_n(z^i),\nu\rangle|\leq|\langle y_n(z^i),\nu\rangle|\leq\sqrt{d}.
	\end{equation*}
	Thus, for $t'_n$ large enough the condition $x+y-z_n(z^i)\in\partial R_{t'_n,\ell'_n}^{\nu}$ implies that there exists $j\in\{1,\ldots,d-1\}$ such that 
	\begin{equation*}
		|\langle x+y-z_n(z^i),O_{\nu}e_j\rangle|=\frac{t'_n}{2}.
	\end{equation*}
	Since $|y|\leq|y_n(z^i)|\leq\sqrt{d}$ this give
	\begin{equation*}
		\frac{t'_n}{2}-2\sqrt{d}\leq |\langle x-z_n(z^i),O_{\nu}e_j\rangle|\leq \frac{t'_n}{2}+2\sqrt{d}.
	\end{equation*}
	In particular, 
	\begin{equation*}
		\mathcal{H}^{d-1}(O_{\nu}(Q_{n,z^i}\times\{0\})\setminus \overline{A_{n,\nu}(z^i)})\leq C (t'_n)^{d-2}.
	\end{equation*}
	Taking into account the bound \eqref{eq:numberofsmallcubes}, we conclude that
	\begin{equation}\label{eq:secondcase}
		\sum_{i=1}^r\mathcal{H}^{d-1}(O_{\nu}(Q_{n,z^i}\times\{0\})\setminus \overline{A_{n,\nu}(z^i)})\leq C \frac{t_k^{d-1}}{t'_n}.
	\end{equation}
	Gathering \eqref{eq:firstcase} and \eqref{eq:secondcase} we finally obtain
	\begin{equation}\label{eq:firstandsecondcase}
		\mathcal{H}^{d-1}\bigg(\Big(R_{t_k,\ell_k}^{\nu}\setminus\bigcup_{i=1}^r\overline{A_{n,\nu}(z^i)}\Big)\cap H^{\nu}\bigg)\leq C \bigg(t_k^{d-2}t'_n+\frac{t_k^{d-1}}{t'_n}\bigg).
	\end{equation}
	Next we treat the term $\mathcal{H}^{d-1}(\partial A_{n,\nu}(z^i)\cap S_v)$. Consider $x\in\partial A_{n,\nu}(z^i)\cap S_v$ such that it is in the relative interior of $\partial A_{n,\nu}(z^i)$ (the measure of the remaining part is negligible). Then without loss generality we can assume that
	\begin{align*}
		&v^+(x)=u_i^+(x)=u^{a,b,\nu}_{y_n(z^i)}(x)=b,
		\\
		&v^-(x)= u^{a,b,\nu}_{y_n(z^j)}(x)=a\; \text{ for some }\; j\neq i\quad\text{or}\quad v^-(x)=u^{a,b,\nu}(x)=a.
	\end{align*} 
	This implies that
	\begin{align*}
		&\langle x-y_n(z^i),\nu\rangle >0,
		\\
		&\langle x-y_n(z^j),\nu\rangle \leq 0\quad\text{or}\quad \langle x,\nu\rangle \leq 0.
	\end{align*}
	If $\langle x,\nu\rangle\leq 0$, we have $0<\langle x-y_n(z^i),\nu\rangle\leq\langle-y_n(z^i),\nu\rangle\leq\sqrt{d}$. If instead $\langle x-y_n(z^j),\nu\rangle\leq 0$, we deduce that 
	\begin{equation*}
		0<\langle x-y_n(z^i),\nu\rangle=\langle x-y_n(z^j),\nu\rangle+\langle y_n(z^j)-y_n(z^i),\nu\rangle\leq \langle y_n(z^j)-y_n(z^i),\nu\rangle\leq 2\sqrt{d}.
	\end{equation*}
	Since $z_n(z^i)-y_n(z^i)\in H^{\nu}$, the above estimates yield as well
	\begin{equation*}
		0<\langle x-z_n(z^i),\nu\rangle\leq 2\sqrt{d}.
	\end{equation*}
	If instead $v^+(x)=a$ and $v^-(x)=b$, we deduce by the same argument that $-2\sqrt{d}\leq \langle x-z_n(z^i),\nu\rangle\leq 0$. Thus we obtain for $\ell'_n$ large enough that
	\begin{align*}
		\mathcal{H}^{d-1}(\partial A_{n,\nu}(z^i)\cap S_v)\leq& \mathcal{H}^{d-1}(\partial A_{n,\nu}(z^i)\cap \{x:\,|\langle x-z_n(z^i),\nu\rangle|\leq 2\sqrt{d}\})
		\\
		=&\mathcal{H}^{d-1}(\partial R_{t'_n,\ell'_n}^{\nu}\cap \{y:\,|\langle y,\nu\rangle|\leq 2\sqrt{d}\})\leq C (t'_n)^{d-2},
	\end{align*}
	where we used a change of variables taking into account that $\partial A_{n,\nu}(z^i)-z_n(z^i)=\partial R_{t'_n,\ell'_n}^{\nu}$. Summing over all $i=1,\ldots,r$ we deduce from \eqref{eq:numberofsmallcubes} that
	\begin{equation}\label{eq:otherterm}
		\sum_{i=1}^r\mathcal{H}^{d-1}(\partial A_{n,\nu}(z^i)\cap S_v)\leq C\frac{t_k^{d-1}}{t'_n}.	
	\end{equation}
	Inserting \eqref{eq:firstandsecondcase} and \eqref{eq:otherterm} in \eqref{eq:splittingincases} we infer that
	\begin{equation*}
		E_{g}(\w)(v,R_{t_k,\ell_k}^{\nu})\leq \sum_{i=1}^r E_{g}(\w)(u_{n}^i,A_{n,\nu}(z^i))+C\left( t_k^{d-2}t'_n+\frac{t_k^{d-1}}{t'_n}\right).
	\end{equation*}
	Note that thanks to~\eqref{eq:shiftfullinclusion}, $v$ is admissible for the minimization problem defining $X_{t_k,\ell_k}^{a,b,\nu}$. Thus, since $u_{n}^i(\w)$ was arbitrary by minimization and using that $Y_{n,z^i}$ have the same distribution as $(t'_n)^{d-1}X_{t'_n,\ell'_n}^{a,b,\nu}(g)$, we can take the expectation of the above estimate to deduce that
	\begin{align*}
		\mathbb{E}[X_{t_k,\ell_k}^{a,b,\nu}(g)]\leq& \frac{1}{t_k^{d-1}}\mathbb{E}[E_{g(\cdot)}(v,R_{t_k,\ell_k}^{\nu})]\leq \underbrace{r\left(\frac{t'_n}{t_k}\right)^{d-1}}_{\leq 1}\mathbb{E}[X_{t'_n,\ell'_n}^{a,b,\nu}(g)]+\frac{C}{t_k^{d-1}}\left( t_k^{d-2}t'_n+\frac{t_k^{d-1}}{t'_n}\right)
		\\
		\leq & \mathbb{E}[X_{t'_n,\ell'_n}^{a,b,\nu}(g)]+C\left( \frac{t'_n}{t_k}+\frac{1}{t'_n}\right).
	\end{align*}
	Now letting first $k\to +\infty$ and then $n\to +\infty$ we obtain
	\begin{equation*}
		\limsup_{k\to +\infty}\mathbb{E}[X_{t_k,\ell_k}^{a,b,\nu}(g)]\leq\liminf_{n\to +\infty}\mathbb{E}[X_{t'_n,\ell'_n}^{a,b,\nu}(g)].
	\end{equation*}
	Since the sequences $t'_n,\ell'_n$ and $t_k,\ell_k$ were arbitrary, the limit of the expectations exists. By setting $\ell_k=t_k$, we deduce the claim from Theorem \ref{t.compressedprocess} and the dominated convergence theorem.
\end{proof}
Now we can prove the almost sure convergence of the process $X_{t,\ell_t}^{a,b,\nu}(g)$ towards $g_{\rm hom}(a,b,\nu)$ under the Assumption \ref{ass:2} when the weight $\pi$ has higher integrability. 
\begin{proof}[Proof of Corollary \ref{c.asirrational}]
	Let us fix $\nu\in\Sd$, $(a,b)\in\mathcal{M}\times\mathcal{M}$, and let $\bar{\ell}_n\to+\infty$ be an arbitrary diverging sequence. We first show that almost surely we have
	\begin{equation}\label{eq:bar_l_n}
		\lim_{n\to +\infty}\left(X_{n,\bar{\ell}_n}^{a,b,\nu}(g)(\omega)-\mathbb{E}[X_{n,\bar{\ell}_n}^{a,b,\nu}(g)]\right)=0.
	\end{equation}
	To this end, let $r>2(d-1)$ be as in the assumptions and let $p\defas\frac{r}{2(d-1)}>1$, so that $2p(d-1)=r$. Moreover, let $\alpha>0$ be sufficiently small such that $p\,\alpha\in(0,p\,(d-1)-1)\neq\emptyset$; upon decreasing $\bar{\ell}_n$ and taking its lower integer part it is not restrictive to assume that $\bar{\ell}_n\leq n^\alpha$ and $\bar{\ell}_n\in\N$ for every $n\in\N$, so that
	\begin{equation}\label{cond:ell-n-bar}
		(\overline{\ell}_n n^{1-d})^p\leq n^{p\,\alpha-p\,(d-1)}.
	\end{equation} 
	The choice of $\alpha$ ensures that $p\,\alpha-p\,(d-1)<-1$. Hence, for every $\delta>0$ an application of Chebyshev's inequality together with Theorem~\ref{t.concentration} and~\eqref{cond:ell-n-bar} gives
	\begin{align*}
		\sum_{n\in\N}\mathbb{P}\left(|X_{n,\bar{\ell}_n}^{a,b,\nu}(g)(\omega)-\mathbb{E}[X_{n,\bar{\ell}_n}^{a,b,\nu}(g)]|\geq\delta\right)\leq& \sum_{n\in\N}\frac{1}{\delta^{2p}}\E\big[\big|X_{n,\bar{\ell}_n}^{a,b,\nu}(g)-\E[X_{n,\bar{\ell}_n}^{a,b,\nu}(g)]\big|^{2p}\big]
		\\
		\leq & C(\delta,p) \sum_{n\in\N}n^{p\,\alpha-p\,(d-1)} \int_0^{+\infty}(s+1)^{r}\pi(s)\,\mathrm{d}s<+\infty.
	\end{align*}
	Thus, the sequence $X_{n,\bar{\ell}_n}^{a,b,\nu}(g)(\omega)-\mathbb{E}[X_{n,\bar\ell_n}^{a,b,\nu}(g)]$ converges completely and hence almost surely to $0$,\ie~\eqref{eq:bar_l_n} follows. As a consequence, using Proposition~\ref{p.expectation} and Remark~\ref{r.mainresult}~\ref{rem:rational} we find a set $\Omega'\subset\Omega$ of full probability such that
	\begin{equation}\label{e:convergence-irrational-n}
	\lim_{n\to+\infty}X_{n,\bar{\ell}_n}^{a,b,\nu}(g)(\omega)=\lim_{n\to+\infty}X_{n,n}^{a,b,\nu}(g)(\omega)=g_{\rm hom}(a,b,\nu)\quad\text{for every}\ \w\in\Omega'.
	\end{equation}
	Note that for any $n\in\N$ and any $\ell\in[\bar{\ell}_n,n]$ the monotonicity property~\eqref{est:monotonicity-ell} implies that
	\begin{equation*} 
	X_{n,n}^{a,b,\nu}(g)(\omega)\leq X_{n,\ell}^{a,b,\nu}(g)(\omega)\leq X_{n,\bar{\ell}_n}^{a,b,\nu}(g)(\omega)\quad\text{for every}\ \w\in\Omega.
	\end{equation*}
	In particular, in view of~\eqref{e:convergence-irrational-n}, for any $\delta>0$ and any $\omega\in\Omega'$ there exists $n_0=n_0(\omega,\delta)\in\N$ such that
	\begin{equation*}
	|X_{n,\ell}^{a,b,\nu}(g)(\omega)-g_{\rm hom}(a,b,\nu)|<\delta\quad\text{for all}\ n\geq n_0\ \text{and all}\ \ell\in[\bar{\ell}_n,n].
	\end{equation*}
	From this we immediately deduce that
	\begin{equation}\label{ex:limit-tn-natural}
	\lim_{\substack{t_n\to+\infty\\t_n\in\N}}X_{t_n,\ell_n}^{a,b,\nu}(g)(\omega)= g_{\rm hom}(a,b,\nu)\quad\text{for every}\ \w\in\Omega',
	\end{equation}
	provided $t_n\geq\ell_n\geq\bar{\ell}_n$ for every $n\in\N$.
	
	The case of arbitrary sequences $t_n\to +\infty,\ell_n\to+\infty$ with $t_n\geq\ell_n\geq\bar{\ell}_n$ for every $n\in\N$ can be treated by combining~\eqref{est:monotonicity-ell} and~\eqref{est:monotonicity-t}. Namely, we consider the auxiliary sequences $t_n^-\defas\lfloor t_n\rfloor$, $t_n^+\defas\lceil t_n\rceil$. 
	Then~\eqref{est:monotonicity-t} yields
	\begin{align*}
		X_{t_n^+,\ell_n}^{a,b,\nu}(g)(\w)\leq X_{t_n,\ell_n}^{a,b,\nu}(g)(\w)+\frac{c(t_n^+-t_n)}{t_n^+}.
	\end{align*}
	Since $\ell_n\leq t_n^+$, we deduce from~\eqref{ex:limit-tn-natural} that
	\begin{equation*}
		\liminf_{n\to +\infty}X_{t_n,\ell_n}^{a,b,\nu}(g)(\w)\geq\liminf_{n\to+\infty}X_{t_n^+,\ell_n}^{a,b,\nu}(g)(\w)\geq g_{\rm hom}(a,b,\nu).
	\end{equation*}
	To prove the reverse inequality for the limit superior, we combine~\eqref{est:monotonicity-t} with~\eqref{est:monotonicity-ell} to deduce that
	\begin{equation*}
		X_{t_n,\ell_n}^{a,b,\nu}(g)(\w)\leq X_{t_n,\bar{\ell}_n}^{a,b,\nu}(g)(\w)\leq X_{t_n^-,\bar{\ell}_n}^{a,b,\nu}(g)(\w)+\frac{c(t_n-t_n^-)}{t_n}.
	\end{equation*}
	By assumption $\bar{\ell}_n\in\N$, so that $\bar{\ell}_n\leq t_n^-$. Thus, applying~\eqref{ex:limit-tn-natural} yields
	\begin{equation*}
		\limsup_{n\to +\infty}X_{t_n,\ell_n}^{a,b,\nu}(g)(\w)\leq\limsup_{n\to+\infty}X_{t_n^-,\bar{\ell}_n}^{a,b,\nu}(g)(\w)\leq g_{\rm hom}(a,b,\nu),
	\end{equation*} 
	which concludes the proof.
\end{proof}
\section*{Acknowledgments}
\noindent The authors wish to thank Marco Cicalese for fruitful discussions.
The work of A.\ Bach was supported by the DFG Collaborative Research Center TRR 109, ``Discretization in Geometry and Dynamics''.

\end{document}